\newtheorem{thm}{Theorem}
\newtheorem{lem}[thm]{Lemma}
\newtheorem{cor}[thm]{Corollary}
\newtheorem{prop}[thm]{Proposition}
\newtheorem{dfn}[thm]{Definition}
\newtheorem{eg}[thm]{Example}
\newtheorem{question}[thm]{Question}
\renewcommand{\leq}{\leqslant}
\renewcommand{\geq}{\geqslant}
\renewcommand{\setminus}{\smallsetminus}
\DeclareMathOperator{\Gal}{Gal}
\DeclareMathOperator{\val}{val}
\DeclareMathOperator{\tr}{tr}
\DeclareMathOperator{\Spec}{Spec}
\DeclareMathOperator{\hmg}{hmg}
\DeclarePairedDelimiterX\ip[2]{\langle}{\rangle}{#1, #2}
\newcommand{\R}{\mathbb{R}}
\begin{document}

\title{Singularities and Genus of the $k$-Ellipse}

%    Information for first author
\author{Yuhan Jiang}
\address{Department of Mathematics, University of California, Berkeley}
\email{michelle.jiang@berkeley.edu}
%    Address of record for the research reported here
%\address{Department of Mathematics, University of California, Berkeley}
%    Current address
%\curraddr{Department of Mathematics and Statistics,
%Case Western Reserve University, Cleveland, Ohio 43403}
%\email{xyz@math.university.edu}
%    \thanks will become a 1st page footnote.
%\thanks{The first author was supported in part by NSF Grant \#000000.}

%    Information for second author
\author{Weiqiao Han}
\address{Department of Electrical Engineering and Computer Science, Massachusetts Institute of Technology}
\email{weiqiaoh@mit.edu}

%\thanks{Support information for the second author.}

%    General info
%\subjclass[2000]{Primary 54C40, 14E20; Secondary 46E25, 20C20}

%\date{January 1, 2001 and, in revised form, June 22, 2001.}

%\dedicatory{This paper is dedicated to our advisors.}

%\keywords{Differential geometry, algebraic geometry}

\newcommand{\CP}{\mathbb{CP}}
\newcommand{\Z}{\mathbb{Z}}
\newcommand{\N}{\mathbb{N}}
\newcommand{\C}{\mathbb{C}}
\begin{abstract}
A $k$-ellipse is a plane curve consisting of all points whose distances from $k$ fixed foci sum to a constant. We determine the singularities and genus of its Zariski closure in the complex projective plane. The paper resolves an open problem stated by Nie, Parrilo and Sturmfels in 2008.
\end{abstract}

\maketitle

\section{Introduction}
The \textit{$k$-ellipse} for $k\in \N$ with \textit{foci} $(u_1, v_1), (u_2, v_2), \dots, (u_k, v_k) \in \mathbb{R}^2$ and \textit{radius} $r$ is the following curve in the plane:
\begin{align}\label{eq:kellipse}
    \left\{ (x,y) \in \mathbb{R}^2: \sum_{i=1}^k \sqrt{(x-u_i)^2 + (y-v_i)^2} = r \right\},
\end{align}
that is, the set of points in $\R^2$ whose distances from $k$ foci sum up to $r$. For example, a 1-ellipse is a circle, and a 2-ellipse is an ellipse in the usual sense. The curve bounds a convex set that has a linear matrix inequality (LMI) representation \cite{nie2008}. A \textit{(projective) algebraic $k$-ellipse} is the Zariski closure of a $k$-ellipse in $\mathbb{CP}^2$. It is an algebraic curve whose defining polynomial has a determinantal representation. We work with \emph{generic} algebraic $k$-ellipses in this paper. By generic, we mean all special cases one can think of are excluded; for example, we assume the foci are not collinear. In the configuration space of algebraic $k$-ellipses, the set of generic $k$-ellipses is a nonempty Zariski open (hence dense) set.

The degree of a generic algebraic $k$-ellipse is $2^k$ when $k$ is odd and $2^k - \binom{k}{k/2}$ when $k$ is even. For example, the degrees of algebraic $k$-ellipses for $k = 1,\ldots, 6$ are $2,2,8,10,32,44$. By the degree-genus formula \cite[Theorem 7.37]{kirwan_1992}, we may expect the genus to be, for example, 21 for a 3-ellipse, if the $k$-ellipse were nonsingular. However, the genus of generic algebraic $k$-ellipses for $k = 1,\ldots, 6$ is known to be $0, 0, 3, 6, 25, 55$, which implies that the curve is highly singular. How to find the genus of a generic algebraic $k$-ellipse for a general $k$, or how to describe its singularities were unknown. The genus is also related to the degree of the dual curve \cite[Chapter 7]{wall_2004}, and the degree of the dual curve is the algebraic degree \cite{nie2010algebraic} of the semidefinite representation of the $k$-ellipse \cite{nie2008}.
So the following questions were raised in \cite[Section 5]{nie2008}. 
\begin{question}
Is there a formula for the genus of the algebraic $k$-ellipse?
\end{question}
\begin{question}
Is there a nice geometric characterization of all singular points of the algebraic $k$-ellipse?
\end{question}
\begin{question}
Is there a formula for the degree of the dual curve of the algebraic $k$-ellipse?
\end{question}

In this paper, we address all three questions above. We prove the following theorems.
\begin{thm}\label{thm:genus}
Let $g_k$ denote the genus of an algebraic $k$-ellipse.
Then
\[
g_k = \left\{\begin{matrix} (k-2) 2^{k-2} + 1 & k\text{ is odd and } k \geq 3, \\ (k-2) 2^{k-2} - \binom{k-1}{k/2} + 1 & k \text{ is even.} \end{matrix} \right.
\]
\end{thm}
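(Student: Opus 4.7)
The plan is to compute the genus by means of the classical formula
\[
g \;=\; \binom{d-1}{2} \,-\, \sum_{p} \delta_p,
\]
where $d$ is the degree of the algebraic $k$-ellipse and $\delta_p$ is the local delta invariant at each singular point $p\in\CP^2$. The work then splits into three parts: (i) produce a workable description of the defining polynomial $P$; (ii) locate all singular points of $\{P=0\}$ under the genericity hypothesis; (iii) compute $\delta_p$ at each and sum.

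For (i), set $c_i(x,y,z)=(x-u_iz)^2+(y-v_iz)^2$. Up to a unit,
\[
P \;=\; \prod_{\epsilon\in\{\pm 1\}^k}\Bigl(\textstyle\sum_{i=1}^k \epsilon_i\sqrt{c_i}\,-\,rz\Bigr),
\]
which is invariant under each $\sqrt{c_i}\mapsto-\sqrt{c_i}$ and hence a polynomial in $x,y,z$. The total degree comes out to $2^k$ for odd $k$; for even $k$ the factors indexed by \emph{balanced} sign vectors ($\sum_i\epsilon_i=0$) contribute a spurious factor $z^{\binom{k}{k/2}}$ that must be divided out, giving the degree drop to $2^k-\binom{k}{k/2}$ stated in the introduction.

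For (ii), the obvious candidates are the circular points at infinity $I=(1:i:0)$ and $J=(1:-i:0)$: every $c_i$ vanishes at both (independently of the foci), so every factor above vanishes there, producing singularities of high multiplicity. A Jacobian calculation in each affine chart---combined with a genericity argument excluding finite points at which several sub-sums $\sum_{i\in S}\pm\sqrt{c_i}=0$ vanish simultaneously---should show that for generic foci these two points are the \emph{only} singularities of the curve.

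For (iii), I work in the chart $y=i+\eta$, $z=\zeta$ about $I$, where direct expansion yields
\[
c_i \;=\; 2i\bigl(\eta+\beta_i\zeta\bigr)\,+\,O\bigl(\eta^2,\eta\zeta,\zeta^2\bigr),\qquad \beta_i := iu_i-v_i,
\]
with the $\beta_i$ pairwise distinct generically. The analytic branches of $\{P=0\}$ at $I$ are enumerated via Puiseux expansions of $\sum_i\epsilon_i\sqrt{c_i}=rz$ in $\zeta^{1/2}$, one branch per equivalence class of sign vectors. Pairwise intersection multiplicities of distinct branches are then read off from the Newton polygon of the corresponding difference, and depend combinatorially on which indices the two sign vectors agree at. Assembling
\[
\delta_I \;=\; \sum_{b}\delta_b \,+\, \sum_{b<b'}i_I(b,b')
\]
and using $\delta_J=\delta_I$ by complex conjugation, one arrives at $g_k=\binom{d-1}{2}-2\delta_I$, which after simplification should yield the stated formula and recover the known values $g_3=3,\,g_4=6,\,g_5=25,\,g_6=55$.

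The main obstacle is this combinatorial enumeration at $I$: the branch count grows like $2^{k-1}$, and the order of contact between two branches depends on the symmetric-difference structure of their sign vectors. Reaching a closed form will require organizing the sum according to the cardinality of the disagreement set and treating separately the branches coming from balanced sign vectors---the same parity split responsible for the degree drop when $k$ is even, and hence for the correction term $-\binom{k-1}{k/2}$ appearing in the genus formula.
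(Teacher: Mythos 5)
Your overall strategy --- Noether's formula $g=\binom{d-1}{2}-\sum_P\delta_P$ together with a Puiseux-series analysis of the branches at the circular points $[\pm\mathrm{i}:1:0]$ --- is the same as the paper's, and your parts (i) and (iii) are essentially what the paper carries out: it shows the curve is locally a bouquet of ordinary cusps with pairwise distinct tangents at each circular point, with no infinitely near singular points, so that $\delta_P=\binom{m_P}{2}$ with $m_P=2^{k-1}$ for odd $k$ and $m_P=2^{k-1}-\binom{k}{k/2}$ for even $k$. But step (ii) contains a fatal error: the two circular points are \emph{not} the only singularities, and no genericity assumption can make them so. A point at which two distinct factors $f_\sigma=rz-\sum_i\sigma_i\sqrt{c_i}$ and $f_\tau$ vanish simultaneously is a crossing of two analytic branches of the curve, hence a node. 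The condition $f_\sigma=f_\tau=0$ says precisely that the point lies on both the degenerate (radius-$0$) $j$-ellipse whose foci are indexed by the disagreement set of $\sigma$ and $\tau$ (of size $j$) and the $(k-j)$-ellipse of radius $r$ with the remaining foci; by B\'ezout's theorem these two plane curves always intersect, so for $k\ge 3$ (where some $j$ with $2\le j\le k-1$ exists) such points are unavoidable. This is the content of Theorem \ref{thm:affinesing} and Proposition \ref{prop:nodal_singularities}, which show that these intersections are exactly the affine singularities, that they are nodes, and count them. Their contribution is not optional: for $k=3$ your scheme yields $g=\binom{7}{2}-2\binom{4}{2}=9$, while the correct value $3$ requires subtracting the $6$ affine nodes as well. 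Since $\delta$ is a local invariant, no refinement of the branch combinatorics at $I$ and $J$ can absorb this deficit.

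Two smaller points. First, the number of local branches at $I$ is $2^{k-2}$ for odd $k$ (and $\tfrac12\bigl(2^{k-1}-\binom{k}{k/2}\bigr)$ for even $k$), not $2^{k-1}$: the conjugate pair of Puiseux solutions in $\zeta^{1/2}$ coming from $f_\sigma$ and $f_{-\sigma}$ together form a \emph{single} cuspidal branch, and the reduced tangent cone has degree $2^{k-2}$. Second, once one knows each branch is an ordinary cusp ($\delta_b=1$) and that the tangents are pairwise distinct (so $i_I(b,b')=m_b m_{b'}=4$), your sum $\delta_I=\sum_b\delta_b+\sum_{b<b'}i_I(b,b')$ collapses to $\binom{m_I}{2}$ without any further Newton-polygon combinatorics; the anticipated ``symmetric-difference'' enumeration at $I$ is not where the parity correction $-\binom{k-1}{k/2}$ comes from. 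That correction enters through the reduced multiplicity $m_I=2^{k-1}-\binom{k}{k/2}$ at infinity and through the count of affine nodes, both of which reflect the degree drop for even $k$.
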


We will prove this theorem in Section 4. The algebraic $k$-ellipse is parametrized by its foci and its radius. These parameters $u_1,v_1,\ldots,u_k,v_k,r$ define the \textit{configuration space} $\Spec \C[u_1,v_1,\ldots,u_k,v_k, r]$  of the algebraic $k$-ellipse.

\begin{thm}\label{thm:singularities}
An algebraic $k$-ellipse has all singularities nodal except for $P := [\pm \mathrm{i}: 1: 0]$ where the multiplicity is twice the number of local branches.
\end{thm}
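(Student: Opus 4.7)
The defining polynomial $F$ of the algebraic $k$-ellipse equals, up to a nonzero scalar, the Galois-invariant product
\[
\prod_{\epsilon \in \{\pm 1\}^k} L_\epsilon, \qquad L_\epsilon := \sum_{i=1}^k \epsilon_i \sqrt{f_i(x,y,z)} - rz,\quad f_i = (x - u_iz)^2 + (y-v_iz)^2,
\]
for $k$ odd; for $k$ even, $F$ is this same product divided by the extra factor $z^{\binom{k}{k/2}}$ acquired from the $L_\epsilon$'s with $\sum \epsilon_i = 0$. Each factor $L_\epsilon$ carves out a local analytic branch $C_\epsilon$, and the singularities of $F=0$ arise from coincidences among the $C_\epsilon$. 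Since $f_i|_{z=0} = x^2 + y^2$ for every $i$, the projective curve meets the line at infinity only at $P = [\pm i : 1 : 0]$.

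\textbf{Affine singularities are nodal.} Each $C_\epsilon$ is smooth away from the branch loci $\{f_i = 0\}$, and for generic radius these loci do not meet $F=0$. So every affine singular point arises as a pairwise intersection $C_\epsilon \cap C_{\epsilon'}$. A dimension count on the configuration space, combined with a Jacobian computation on the difference $L_\epsilon - L_{\epsilon'}$, shows that for generic foci exactly two branches meet at each affine singular point with distinct tangent directions, giving an ordinary node.

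\textbf{Local analysis at $P$.} In the chart $y=1$ with local coordinates $w = x - i$ and $\tilde z = z$ centered at $P_+ = [i:1:0]$, Taylor expansion gives $f_i = 2i(w - \alpha_i \tilde z) + O((w,\tilde z)^2)$ with $\alpha_i := u_i - iv_i$. Setting $Y_i = w - \alpha_i \tilde z$ and $X_i = \sqrt{Y_i}$, the leading order of the product is, up to a nonzero constant,
\[
\prod_\epsilon \sum_i \epsilon_i X_i \;=\; G(Y_1, \ldots, Y_k)^2,
\]
where the square arises from pairing $\epsilon$ with $-\epsilon$, and $G$ (obtained by Galois averaging) is a polynomial in the $Y_i$'s of total degree $2^{k-2}$ in $(w, \tilde z)$.

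\textbf{Branch counting and main obstacle.} The key geometric claim is that for generic foci, $G(w, \tilde z) = 0$ decomposes over $\C$ into $2^{k-2}$ distinct lines through the origin. For $k$ even, exactly $\binom{k}{k/2}/2$ of these linear factors equal $\tilde z$ and get cancelled by the $z^{\binom{k}{k/2}}$-denominator, leaving a squarefree product $\tilde G$ of $2^{k-2} - \binom{k}{k/2}/2$ distinct linear factors; for $k$ odd one takes $\tilde G = G$. In either case $F \sim \tilde G^2 + (\text{higher order})$ near $P_+$, and a Puiseux analysis shows that $F = 0$ has exactly as many local branches at $P_+$ as factors of $\tilde G$, each one a cusp of multiplicity $2$ tangent to its line. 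Summing gives $\mathrm{mult}_{P_+} F = 2 \cdot (\text{number of branches})$. The hardest step will be establishing the generic distinctness of the linear factors of $G$---a discriminant non-vanishing condition on $\alpha_1, \ldots, \alpha_k$---together with the generic cuspidal (rather than tacnodal) behavior of each branch; I would verify the former by exhibiting a single configuration with nonzero discriminant and invoking Zariski openness in the configuration space, and the latter by checking that the first correction to $\tilde G^2$ has odd vanishing order along each linear factor of $\tilde G$.
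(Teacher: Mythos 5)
Your overall architecture matches the paper's: decompose the defining polynomial as the Galois product of the $2^k$ square-root branches, treat affine singularities as pairwise branch intersections, and at $[\pm\mathrm{i}:1:0]$ show the tangent cone is a perfect square whose distinct linear factors each carry a single cuspidal branch. The analysis at infinity is essentially the paper's argument (its Propositions on the tangent cone and the ``bouquet of cusps''), and your remaining obstacle there (generic distinctness of the linear factors of $G$, plus odd vanishing order of the correction term) is exactly what the paper resolves by genericity and by a Puiseux/Newton-polygon computation showing each tangent direction supports a parametrization $x=z(a+\sqrt{z}\,p(z))$.

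The genuine gap is in the affine part. First, your claim that ``for generic radius these loci $\{f_i=0\}$ do not meet $F=0$'' is false: each $f_i=0$ is a (degenerate) conic, so by B\'ezout it meets the degree-$2^k$ curve $F=0$ in $\CP^2$; no choice of $r$ avoids this. These are precisely the points where two branches $C_\epsilon, C_{\epsilon'}$ with $\epsilon,\epsilon'$ differing in a \emph{single} sign collide, and they must be handled separately: the paper shows by an explicit computation of $\partial p_k/\partial z$ on the product $f_\sigma f_{\sigma'}=\bigl(rz-\sum_{i<k}\sigma_i d_i\bigr)^2-f_k$ that such a point can be singular only if it is $[\pm\mathrm{i}:1:0]$. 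Without this step you cannot rule out non-nodal singularities on the circles $f_i=0$. Second, the assertion that each $C_\epsilon$ is smooth away from the branch loci is not automatic and is not covered by your ``Jacobian computation on $L_\epsilon-L_{\epsilon'}$'': the paper needs a separate argument (Euler's relation plus the observation that $\partial f_\sigma/\partial x,\partial f_\sigma/\partial y$ are independent of $r$, so their common zero locus is a fixed finite set missed by $f_\sigma=0$ for generic $r$). Third, the ``dimension count'' showing exactly two branches meet transversally is left unsubstantiated; the paper makes this concrete by identifying the pairwise intersections with intersections of a degenerate $j$-ellipse and a $(k-j)$-ellipse for $2\le j\le k-1$ and invoking genericity and B\'ezout. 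Finally, for $k$ even your claim that the curve meets $z=0$ only at $[\pm\mathrm{i}:1:0]$ is wrong: after dividing by $z^{\binom{k}{k/2}}$ the factors with $\sum\epsilon_i=0$ contribute $\binom{k}{k/2}$ further points on the line at infinity, which must be checked to be nonsingular for the theorem as stated to hold.
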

\begin{thm}\label{thm:dual}
Let $d_k^\vee$ denote the degree of the dual curve of the algebraic $k$-ellipse. Then
\[
d_k^\vee = \left\{ \begin{matrix} (k+1)2^{k-1} & k \text{ is odd,} \\ (k+1)2^{k-1} - 2\binom{k}{k/2} & k \text{ is even.} \end{matrix} \right.
\]
\end{thm}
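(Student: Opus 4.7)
The plan is to derive Theorem \ref{thm:dual} from the Pl\"ucker--Teissier class formula together with the structural information supplied by Theorems \ref{thm:genus} and \ref{thm:singularities}. For an irreducible plane curve $C$ of degree $d$ and geometric genus $g$, the class satisfies
\[
d^\vee = 2d + 2g - 2 - \sum_{P \in \operatorname{Sing}(C)} (m_P - r_P),
\]
where $m_P$ is the multiplicity and $r_P$ the number of local analytic branches of $C$ at $P$. This follows from the polar-intersection identity $d(d-1) = d^\vee + \sum_P (\mu_P + m_P - 1)$ together with the plane-curve relations $\mu_P = 2\delta_P - r_P + 1$ and $g = \binom{d-1}{2} - \sum_P \delta_P$.

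Combined with Theorem \ref{thm:singularities}, every node contributes $m_P - r_P = 0$, while at each circular point $P_\pm := [\pm\mathrm{i}:1:0]$ the condition $m_P = 2 r_P$ gives a contribution of $r_P$. Complex conjugation interchanges $P_+$ and $P_-$, so they share a common branch count $b$; hence
\[
d^\vee = 2d + 2g - 2 - 2b.
\]

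The remaining step is to compute $b$. Using the isotropic coordinates $X = x + \mathrm{i}y$, $Y = x - \mathrm{i}y$ (in which $P_+$ sits on $Y = 0$ and $P_-$ on $X = 0$), each squared distance factors as $(x-u_i)^2 + (y-v_i)^2 = (X - \alpha_i)(Y - \beta_i)$ with $\alpha_i = u_i + \mathrm{i}v_i$ and $\beta_i = \overline{\alpha_i}$. Over an algebraic closure, the defining polynomial of the $k$-ellipse factors as
\[
\prod_{\epsilon \in \{\pm 1\}^k} \left( \sum_{i=1}^{k} \epsilon_i \sqrt{(X - \alpha_i)(Y - \beta_i)} - r \right).
\]
A Puiseux expansion near $Y = 0$ identifies the local branches at $P_+$ with orbits of sign patterns $\epsilon$ under the local monodromy on the square roots $\sqrt{Y - \beta_i}$. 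Counting the orbits, while treating separately the $\binom{k}{k/2}$ vanishing-sum patterns with $\sum_i \epsilon_i = 0$ that appear only for even $k$ (and which already account for the degree drop from $2^k$ to $2^k - \binom{k}{k/2}$), should yield
\[
b = \begin{cases} 2^{k-2}, & k \text{ odd,} \\ 2^{k-2} - \binom{k-1}{k/2}, & k \text{ even.} \end{cases}
\]

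Substituting this $b$, together with the degree $d$ from the introduction and the genus $g$ from Theorem \ref{thm:genus}, into the class formula and simplifying the binomial expressions yields Theorem \ref{thm:dual}. The main obstacle is the branch-count computation at the circular points: matching Puiseux parametrizations to sign-pattern orbits correctly and handling the vanishing-sum patterns in the even-$k$ case. Once $b$ is in hand, the remainder is a short binomial calculation.
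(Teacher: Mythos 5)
Your proposal is correct and follows essentially the same route as the paper: both apply the class formula $d^\vee = 2(d-1) + 2g - \sum_P (m_P - r_P)$, with nodes contributing $0$ and each circular point contributing $m_P - r_P = r_P$. The only divergence is that you sketch a fresh Puiseux/monodromy orbit count for the branch number $b$ at $[\pm\mathrm{i}:1:0]$, which is unnecessary (and left incomplete) since $b = r_P = m_P/2$ already follows from Theorem \ref{thm:singularities} together with the multiplicities in Proposition \ref{prop:sing_at_inf_locations}, and your claimed values of $b$ agree with these.
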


\begin{figure}
\includegraphics[scale=.47]{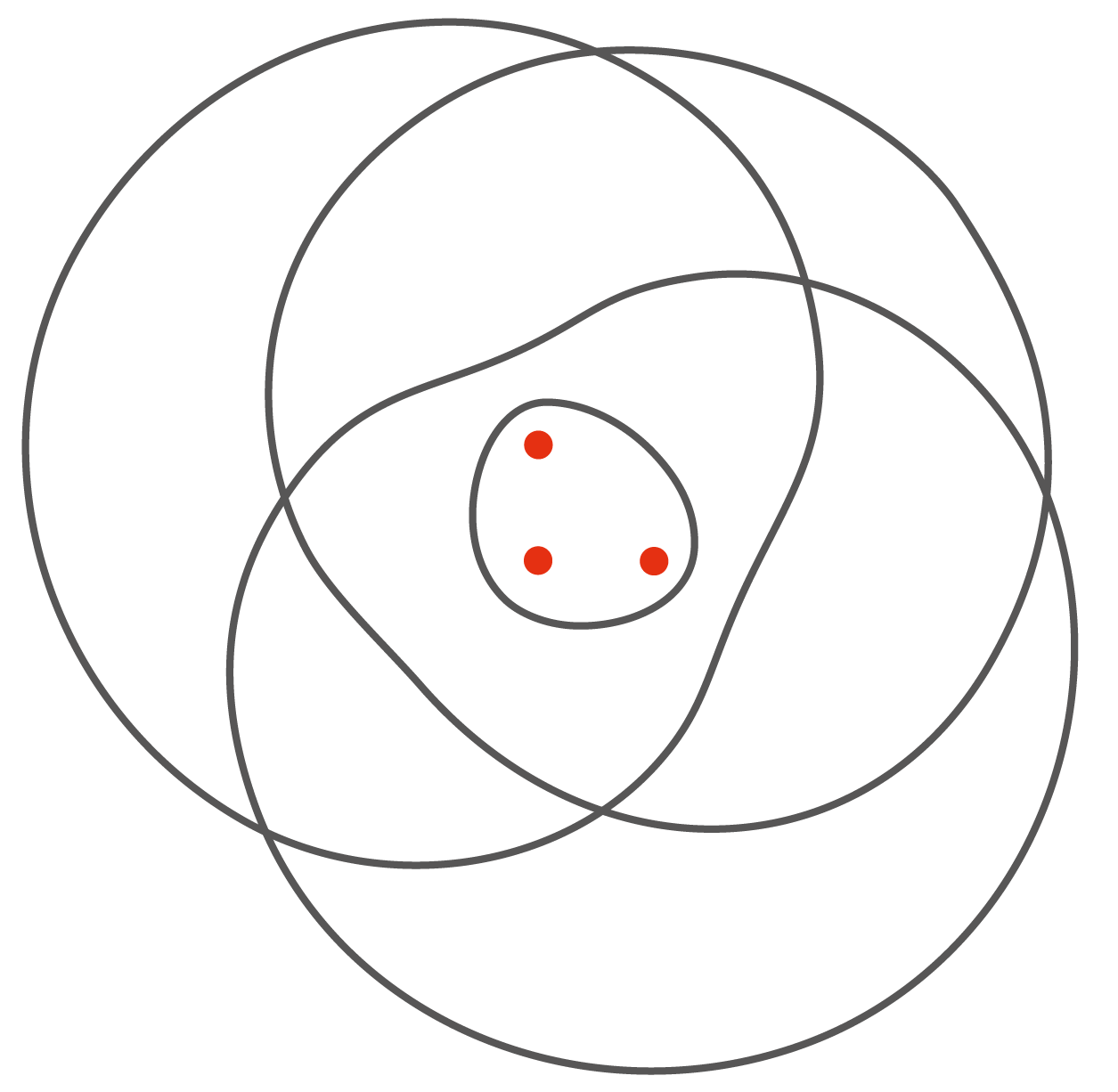}
\includegraphics[scale=.47]{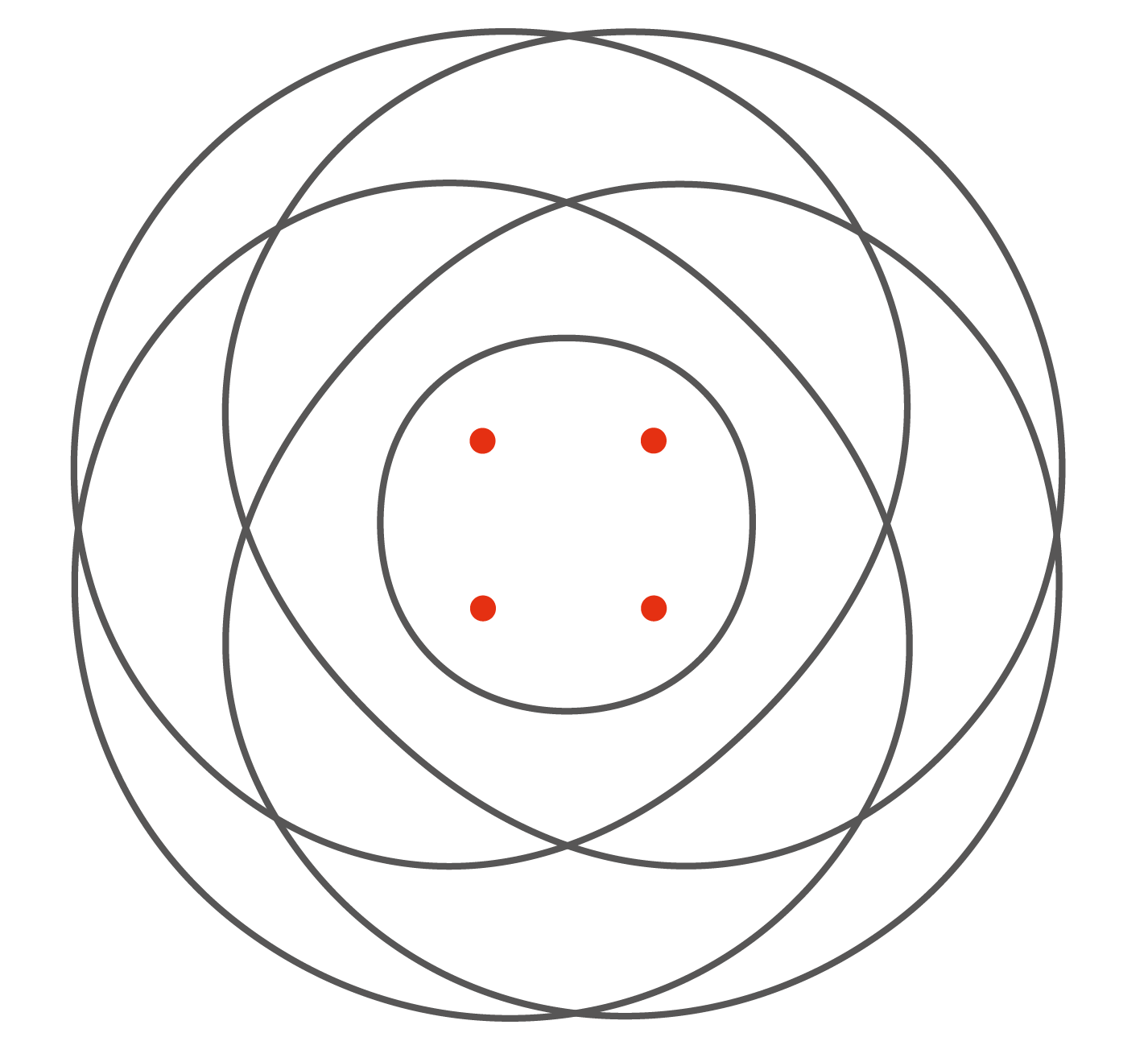}

\caption{Left: An algebraic 3-ellipse with foci (0,0), (1,0), (0,1) (in $U_z$) and radius 3. Right: An algebraic 4-ellipse with foci (0,0), (1,0), (0,1), (1,1) and radius 5.}
\end{figure}

\iffalse 
\begin{eg}
The \textit{bisecting normal} of two points on $\mathbb{R}^2$ is a line through their midpoints, perpendicular to the line through them. This line consists of all points equidistant to the two points. On the affine plane $z \neq 0$, a generic 3-ellipse of radius $r > 0$ with foci $\{(u_i, v_i)\}_{i=1}^3$ is singular at the 3 pairs of intersections of the bisecting normal of a pair of foci with the circle centered at the other foci with radius $r$. Thus a generic 3 ellipse has 6 nodal affine singularities on $\R^2$.
\end{eg}

\begin{eg}
A generic 4-ellipse has 30 nodal singularities. At infinity, $[\pm i: 1: 0]$ each has multiplicity $2^3 - \binom{4}{2} = 2$. Otherwise, the bisecting normal of a pair of foci intersects the 2-ellipse with the other two foci at 2 points, and a degenerate 3-ellipse intersects a circle $2^2 \times 2=8$ times, where $2\times 2 \times 1 = 4$ contributed by $[\pm i: 1: 0]$ should be subtracted. There are $\binom{4}{2}=6$ ways to choose 2 foci out of 4, and 4 ways to choose 3 foci out of 4. Thus, $30 = 2 + 6 \times 2 + 4 \times 4$.
\end{eg}
\fi 
\section{Singular Curves}\label{section:preliminaries}
In this section, we shall introduce terminologies and properties pertaining to singularities of an algebraic curve as well as basic properties of the $k$-ellipse.

Let $\mathbb{K}$ be a field. By $\mathbb{A}^n_{\mathbb{K}}$ we mean the affine $n$-space over $\mathbb{K}$. If $S$ is any set of polynomials in $\mathbb{K}[x_1,\ldots,x_n]$, we let $V(S) = \{P \in \mathbb{A}^n_{\mathbb{K}} | F(P) = 0 \text{ for all } F \in S\}$. If $S$ is a finite set $\{f_1,\ldots,f_s\}$, where $f \in \mathbb{K}[x_1,\ldots, x_n]$, we can also write $V(S)$ as $V(f_1,\ldots,f_s)$. When we work with affine plane curves, $n = 2$.

The complex projective plane $\CP^2$ is the quotient of the set $\{(x, y, z) \mid x,y,z \in \C \} \setminus \{(0,0,0)\}$ under the equivalence relation $\sim$ where for all $\lambda \in \mathbb{C} \setminus \{0\}, (x, y, z) \sim (\lambda x, \lambda y, \lambda z)$. Denote the equivalence class of $(x, y, z)$ by $[x:y:z]$. Let $U_z = \{[x: y: z]| z \neq 0, x, y, z \in \C\} = \{[x: y: 1]|x, y\in \C\} \cong \C^2 = \mathbb{A}_\C^2$. Define $U_x$ and $U_y$ similarly. Then $\{U_x, U_y, U_z\}$ is an open cover of $\CP^2$. An affine curve $f(x, y)$ is often defined on $U_z$, and its closure in $\CP^2$ is the projective curve defined by the homogenization $\hmg(f) = z^{\deg(f)} f(\frac{x}{z},\frac{y}{z})$ with an additional variable $z$. A point $[a : b: c]$ of a projective curve $C$ in $\CP^2$ defined by a homogeneous polynomial $f(x, y, z)$ is called \emph{a singularity} if $\frac{\partial f}{\partial x}(a, b, c) = \frac{\partial f}{\partial y}(a, b, c) = \frac{\partial f}{\partial z}(a, b, c) = 0$.

Let $C = V(f)$, where $f \in k[x,y]$ is squarefree, be an affine plane curve through the origin. We say that the curve $C$ is \emph{locally irreducible} in a neighborhood of the origin (0,0) if the function $f$ cannot be written as a product of two non-zero analytic functions $f(x,y) = f_1(x,y)f_2(x,y)$, both vanishing at the origin. If $f$ admits a factorization into locally irreducible factors $f = \prod_{i=1}^r f_i$, then the curves locally given by the $f_i$'s are called the \emph{local branches of the curve at the origin} \cite[Definition 5.1]{kaza2018}.
We call the smallest degree $m$ of a non-zero term of $f$ the \textit{multiplicity} of $C$ at the origin. Let $f_m$ denote the degree-$m$ part of the polynomial $f$. Then $V(f_m)$ is called the \textit{tangent cone} of $C$ at the origin. The tangent cone $V(f_m)$ contains tangent lines to the branches of $C$ at the origin. To find the tanget cone of any point $P = (a,b) \neq (0,0)$ on $f$, we can translate $P$ to the origin, or directly Taylor-expand $f$ at $P$.
For an algebraic curve $C \subseteq \mathbb{CP}^2$, the above terminologies are defined on $C \cap U_x$, $C \cap U_y$, or $C \cap U_z$.
For example, if a point $P \in C \cap U_x \subseteq \mathbb{CP}^2$ has multiplicity $m$ in $U_x$, then we say $P$ has multiplicity $m$. It is well-defined in the sense that if $P \in C \cap U_x \cap U_y$ has multiplicity $m_1$ in $U_x$ and multiplicity $m_2$ in $U_y$, then $m_1 = m_2$. Denote by $m_P$ the \emph{multiplicity} at $P$ and denote by $r_P$ \emph{the number of local branches} at $P$. 

Let $\tilde{p}_k(x,y)$ be the irreducible polynomial that vanishes on the $k$-ellipse with foci $(u_1,v_1),\ldots,(u_k,v_k)$ and radius $r$ defined by (\ref{eq:kellipse}).
Let $A = (a_{ij})$ be a real $m \times m$ matrix and $B$ a real $n \times n$ matrix. Recall the Kronecker product $A \otimes B$ is the $mn \times mn$ matrix of the block form $\begin{pmatrix} a_{11} B & \cdots & a_{1m} B \\
\vdots & \ddots & \vdots \\
a_{m1} B & \cdots & a_{mm} B \end{pmatrix}$ as a linear map on the tensor product $\R^m \otimes \R^n$ with a standard choice of basis. Define the \textit{tensor sum} of $A$ and $B$ as the $mn \times mn$ matrix $A \oplus B := A\otimes I_n + I_m \otimes B$.
Define the $2^k\times 2^k$ matrix 
\begin{align}
    L_k(x,y) = r \cdot I_{2^k}  + \left[\begin{matrix}x-u_1 & y-v_1\\ y-v_1 & -x+u_1 \end{matrix}\right] \oplus \cdots \oplus \left[\begin{matrix}x-u_k & y-v_k\\ y-v_k & -x+u_k \end{matrix}\right],
\end{align}
which is affine in $x, y$ and $r$.
By \cite[Theorem 2.1]{nie2008}, the $k$-ellipse admits the determinantal representation $\tilde{p}_k(x,y) = \det L_k(x,y)$.

We homogenize $\tilde{p}_k(x,y)$ by introducing an additional variable $z$, and consider the homogeneous polynomial $p_k(x,y,z)$ in the complex projective space $\CP^2$. 
Then $V(p_k)$ is the \textit{algebraic $k$-ellipse} and $V(p_k) \cap U_z$ is the (affine) $k$-ellipse defined by (\ref{eq:kellipse}).
Let
\begin{equation}\label{galois}
q_k(x, y, z) = \prod_{\sigma \in \{-1, +1\}^k} \left( rz - \sum_{i=1}^k \sigma_i \sqrt{(x - u_i z)^2 + (y - v_i z)^2} \right).
\end{equation}
By \cite[Theorem 1.1]{nie2008}, 
\begin{equation} \label{pk}
p_k = \left\{ \begin{matrix} q_k & k \text{ is odd} \\
z^{-\binom{k}{k/2}} q_k & k \text{ is even} \end{matrix} \right.
\end{equation}

\begin{eg}
A generic algebraic 3-ellipse with foci $(0,0), (1,0), (0,1)$ (in $U_z$) and radius $r$ has 8 singularities: $[\pm \mathrm{i}: 1: 0]$ with multiplicity 4, $[\pm {\sqrt{2r^2} \over 2}:\pm {\sqrt{2r^2} \over 2}:1]$ with multiplicity 2, $[{1\over 2}: 1\pm {1\over 2}\sqrt{-1+4r^2}: 1]$ with multiplicity 2, and $[ 1\pm {1\over 2}\sqrt{-1+4r^2}: {1\over 2}: 1]$ with multiplicity 2.
The number of local branches at every singularity equals to its multiplicity, except that
$[\pm \mathrm{i}: 1: 0]$ have the number of local branches half of their multiplicity.
\end{eg}

Let $C, L$ be curves in $\mathbb{CP}^2$ defined by ideals $I_C, I_L$. We define the \emph{intersection multiplicity} of $C$ and $L$ at the point $P$, denoted $I(P, C \cap L)$, as the length of the localized module $(\mathbb{C}[x,y,z]/(I_C + I_L))_P$. The multiplicity of $P$ on $C$ can likewise be defined as the length of the localized module $(C[x,y,z]/I_C)_P$, which agrees with the previous definition. Indeed, if $I_C$ is principally generated by the homogenization of $f$, $\hmg(f) \in \mathbb{C}[x,y,z]$, and if $P = [0:0:1]$ and $\hmg(f)(0,0,1) = 0$, then the length of $(C[x,y,z]/I_C)_P$ is the smallest degree of terms in $f(x,y)$.

\begin{thm}[B\'ezout's Theorem, \cite{fulton89}]
If $C$ and $D$ are two projective curves of degrees $n$ and $m$ in $\mathbb{CP}^2$ which have no common component then they have precisely $n m$ points of intersection counting multiplicities; i.e.
\[
\sum_{p \in C \cap D} I_p(C, D) = nm.
\]
\end{thm}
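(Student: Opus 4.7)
The plan is to prove B\'ezout's theorem by resolving the intersection ring by a Koszul complex, reading off its Hilbert polynomial, and translating that graded count into the sum of local lengths. The strategy fits the paper's framework because the local intersection multiplicity $I_p(C,D)$ is already defined as a length of a localization of the same graded ring, so both sides of the desired identity live in the same algebraic setting.

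First I would write $I_C = (F)$ and $I_D = (G)$ with $\deg F = n$, $\deg G = m$. Since $C$ and $D$ share no component, $F$ and $G$ have no common non-unit factor in the UFD $\C[x,y,z]$, so $(F,G)$ forms a regular sequence. Hence the Koszul complex
\[
0 \to R(-n-m) \to R(-n) \oplus R(-m) \to R \to R/(F,G) \to 0,
\]
with $R = \C[x,y,z]$, is exact. Taking the alternating sum of Hilbert polynomials and using $P_R(t) = \binom{t+2}{2}$, a routine cancellation collapses the expression to the constant $nm$, so $P_{R/(F,G)}(t) \equiv nm$.

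Next I would convert this graded count into the desired sum over intersection points. Because $F, G$ have no common component, the scheme $Z = V(F, G) \subseteq \CP^2$ is zero-dimensional and supported on the finite set $C \cap D$. For $t$ sufficiently large, $\dim_\C (R/(F,G))_t$ equals the Hilbert polynomial value $nm$, and it also equals the total length of the Artinian ring $R/(F,G)$. By the structure theorem for Artinian rings, this length decomposes as $\sum_p \dim_\C (R/(F,G))_{(p)}$, the sum ranging over maximal ideals corresponding to points $p \in C \cap D$, and each summand is $I_p(C,D)$ by the definition given in the paper.

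The main obstacle is the bridge between the graded and local counts, namely showing that the graded dimension in degree $t \gg 0$ equals the total length of the Artinian quotient. This is essentially a Serre vanishing statement that $H^1(\CP^2, \mathcal{I}_Z(t)) = 0$ for $t \gg 0$; alternatively, one can pick a linear form avoiding $Z$ and argue that multiplication by it eventually induces isomorphisms between consecutive graded pieces, stabilizing their common dimension to the length of $R/(F,G)$. Once this bridge is in place, the rest is bookkeeping on the Koszul complex and the Chinese remainder decomposition of the Artinian ring.
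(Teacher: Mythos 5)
The paper does not prove this statement at all; it is quoted as a known result with a citation to Fulton, so there is no in-paper argument to compare against. Your proposal is the standard and correct proof of B\'ezout via the Koszul resolution: since $F$ and $G$ share no factor in the UFD $\C[x,y,z]$, $G$ is a nonzerodivisor modulo $(F)$ (because $R/(F)$ is a hypersurface ring, hence has no embedded primes), the Koszul complex is exact, and the alternating sum of Hilbert polynomials indeed collapses to the constant $nm$ (the second difference of the quadratic $\binom{t-s+2}{2}$ in $s$). One imprecision: $R/(F,G)$ is a one-dimensional graded ring, not Artinian, so the phrase ``total length of the Artinian ring $R/(F,G)$'' is not literally meaningful; but you correctly identify this as the real content of the proof and sketch the right repair. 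To make it airtight: $R/(F,G)$ is Cohen--Macaulay of dimension one, so its associated primes are exactly the minimal primes, namely the homogeneous primes of the points of $Z=V(F,G)$; a general linear form $\ell$ avoiding $Z$ is therefore a nonzerodivisor, multiplication by $\ell$ is injective on graded pieces and eventually bijective since the Hilbert function stabilizes at $nm$, and the stable graded piece is isomorphic to $\Gamma(Z,\mathcal{O}_Z)\cong\prod_{p}\mathcal{O}_{Z,p}$, whose local factors have $\C$-dimension equal to the paper's $I_p(C,D)$. With that bridge spelled out, the argument is complete and consistent with the definition of intersection multiplicity used in Section 2.
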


A point $P$ on an algebraic curve $C$ is called a \emph{node} or a \emph{nodal singularity} if $m_P=r_P=2$ and there are two distinct tangent directions at $P$. If $m_P = 2$ and $P$ has only one tangent $L$, $P$ is called a \emph{cusp} if $I(P, C \cap L) \geq 3$; an \emph{ordinary cusp} if $I(P, C \cap L) = 3$.

Let $d$ be the \emph{degree} of $C$ and $d^\vee$ be the degree of its \emph{dual curve} $C^\vee$ respectively. By degree of a curve we mean the degree of the its defining polynomial. 
The genus $g$ of a curve is related to it degree and singularities by Noether's formula:
\begin{thm}[\cite{kirwan_1992}, Theorem 7.37]\label{thm:genus_formula}
The genus of an irreducible projective curve $C$ of degree $d$ in $\CP^2$ is
\begin{align}
    g = \binom{d-1}{2} - \sum_{P \in Sing (C)} \delta_P,
\end{align}
where $Sing (C)$ is the set of all singularities of $C$, and $\delta_P$ is the delta invariant at $P$ that we shall define shortly.
\end{thm}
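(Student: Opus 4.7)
The plan is to derive the formula by comparing the geometry of $C$ with that of its normalization. First, I would introduce the normalization morphism $\pi\colon \tilde{C} \to C$, where $\tilde{C}$ is a smooth projective curve and $\pi$ is a birational morphism that is an isomorphism away from $\operatorname{Sing}(C)$. The central tool is the short exact sequence of sheaves on $C$,
\begin{equation*}
0 \longrightarrow \mathcal{O}_C \longrightarrow \pi_* \mathcal{O}_{\tilde{C}} \longrightarrow \mathcal{F} \longrightarrow 0,
\end{equation*}
where $\mathcal{F}$ is a skyscraper sheaf supported on the singular locus whose stalk at $P$ has finite $\C$-length equal, by definition, to the delta invariant $\delta_P := \dim_\C \bigl( (\pi_* \mathcal{O}_{\tilde{C}})_P / \mathcal{O}_{C,P} \bigr)$.

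Next, I would pass to the long exact sequence in cohomology. Since both $C$ and $\tilde{C}$ are connected projective curves, $H^0(C,\mathcal{O}_C) = H^0(\tilde{C},\mathcal{O}_{\tilde{C}}) = \C$, and $\mathcal{F}$, being a skyscraper sheaf, has vanishing higher cohomology and $H^0(C,\mathcal{F}) = \bigoplus_{P} \mathcal{F}_P$. Comparing Euler characteristics yields
\begin{equation*}
p_a(C) - g(\tilde{C}) = \sum_{P \in \operatorname{Sing}(C)} \delta_P,
\end{equation*}
where $p_a(C) = 1 - \chi(\mathcal{O}_C)$ is the arithmetic genus of $C$ and $g(\tilde{C})$ is the geometric genus, which is by definition the genus of $C$.

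Then I would compute $p_a(C)$ for a plane curve of degree $d$ using the structure sequence
\begin{equation*}
0 \longrightarrow \mathcal{O}_{\CP^2}(-d) \longrightarrow \mathcal{O}_{\CP^2} \longrightarrow \mathcal{O}_C \longrightarrow 0
\end{equation*}
and the standard cohomology of line bundles on $\CP^2$: Serre duality identifies $H^1(\CP^2, \mathcal{O}(-d))$ with $H^1(\CP^2, \mathcal{O}(d-3))^{\vee}$, and dimension counting of degree-$(d-3)$ monomials gives $p_a(C) = \binom{d-1}{2}$. Substituting into the Euler-characteristic identity of the previous paragraph produces Noether's formula.

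The main obstacle is not any individual step in the cohomological argument, each of which is standard, but the local analysis underlying the delta invariant: one must verify that $\delta_P$, defined here as the colength of $\mathcal{O}_{C,P}$ in its integral closure, agrees with whatever explicit numerical invariant one wishes to use in practice, such as the sum $\sum_{Q} \binom{m_Q}{2}$ over all infinitely near points of $P$, or the Milnor-type count coming from successive blow-ups. Making this identification effective is exactly what will allow the formula to be applied to the $k$-ellipse in Section 4, where the local singularity structure (nodes off $P = [\pm\mathrm{i}:1:0]$, and a controlled multiplicity-versus-branches behavior at $P$ by Theorem~\ref{thm:singularities}) gives an explicit value of $\delta_P$ at every singular point.
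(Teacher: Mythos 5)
Your argument is correct, but note that the paper does not prove this statement at all: it is quoted verbatim as Theorem 7.37 of Kirwan, with the identification of $\delta_P$ delegated to Serre's algebraic definition and to Wall's Theorem 6.5.9 (the paper's Theorem~\ref{thm:delta_and_multiplicity}) for the blow-up formula $\delta_P=\sum\binom{m_Q}{2}$. What you give is the standard coherent-cohomology proof rather than Kirwan's topological one (which resolves the singularities by successive blow-ups and tracks the genus change at each stage). Your route has the virtue of making the correction term transparent: with $\pi\colon\tilde{C}\to C$ the normalization, the sequence $0\to\mathcal{O}_C\to\pi_*\mathcal{O}_{\tilde{C}}\to\mathcal{F}\to 0$ exhibits $\delta_P=\dim_{\C}\bigl((\pi_*\mathcal{O}_{\tilde{C}})_P/\mathcal{O}_{C,P}\bigr)$ exactly as the colength of the local ring in its integral closure, which is precisely the Serre-style definition $\delta_P=\dim_{\C}(\overline{\mathfrak{o}_{\mathfrak{p}}}/\mathfrak{o}_{\mathfrak{p}})$ the paper adopts, so no translation between invariants is needed at that stage. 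Two small points you should make explicit: (i) the step $\chi(C,\pi_*\mathcal{O}_{\tilde{C}})=\chi(\tilde{C},\mathcal{O}_{\tilde{C}})$ uses that $\pi$ is finite, hence affine, so the higher direct images vanish; (ii) $H^0(\tilde{C},\mathcal{O}_{\tilde{C}})=\C$ uses that $C$ is irreducible, so $\tilde{C}$ is connected --- this is where the irreducibility hypothesis enters, and the formula fails without it. You are also right that the remaining work for the application in Section 4 is not in this theorem but in computing $\delta_P$ at $[\pm\mathrm{i}:1:0]$, which the paper does via Proposition~\ref{prop:sing_at_inf_no_infly_near_sing} and Theorem~\ref{thm:delta_and_multiplicity} rather than directly from the colength definition.
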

There are several equivalent definitions of delta invariant. Wall \cite{wall_2004} and Kirwan \cite{kirwan_1992} take the topological point of view, while Serre \cite{serre1988} defines it algebraically. Consider an irreducible algebraic curve $C$ with coordinate ring $\mathfrak{o}$. Let $\bar{\mathfrak{o}}$ be the integral closure of $\mathfrak{o}$ in its fraction field. A point $P$ on $C$ corresponds to a maximal ideal $\mathfrak{p} \subset \mathfrak{o}$. Let $\mathfrak{o_p}$ be $\mathfrak{o}$ localized at $\mathfrak{p}$. Let $\mathfrak{\overline{o_p}} := \bigcap_{\mathfrak{q} \cap \mathfrak{o} = \mathfrak{p}} \mathfrak{\bar{o}_q} \subset \bar{\mathfrak{o}}$. Following \cite[Chapter IV.1]{serre1988}, define the \emph{delta-invariant} at $P$ as $\delta_P := \dim_{\mathbb{C}}(\overline{\mathfrak{o_p}}/\mathfrak{o_p})$, the dimension of $\overline{\mathfrak{o_p}}/\mathfrak{o_p}$ as a finite dimensional $\mathbb{C}$-vector space. Note that $\delta_P$ is invariant under completion. Two singularities are said to be analytically isomorphic if they have the same complete local ring, and it follows that $\delta$ is an analytic invariant.

Another definition of $\delta$ depends on the notion of blow-up. 
Construct the blow-up of $\mathbb{C}^2$ at the point $(0,0)$ as follows \cite[Section 3.3]{wall_2004}. Consider the product $\mathbb{C}^2 \times \mathbb{P}^1$. Let $x, y$ be the affine coordinates of $\mathbb{C}^2$ and $\eta, \xi$ be the homogeneous coordinates of $\mathbb{P}^1$. Define the \emph{blow-up} of $\mathbb{C}^2$ at (0,0) to be the closed subset $X$ of $\mathbb{C}^2 \times \mathbb{P}^1$ defined by $x\eta = y\xi$. The projection from the product to $\mathbb{C}^2$ defines a map $\pi: X \to \mathbb{C}^2$. The preimage of (0,0) is the entire projective line. By \cite[Section 3.2]{wall_2004}, the $\mathbb{P}^1$ corresponding to the affine origin is called the \emph{exceptional curve} of the blow up. We say \emph{$Q$ is infinitely near $P$} if $Q$ lies on any exceptional curve obtained by blowing up $P$ successively until there are no singular points left.
\begin{thm}[\cite{wall_2004}, Theorem 6.5.9 and its Remark]\label{thm:delta_and_multiplicity}
For a point $P$ on a curve $C$ with multiplicity $m_P$, $\delta_P = \sum_P \binom{m_P}{2}$ summing over all infinitely near singular points lying over $P$ including $P$.
\end{thm}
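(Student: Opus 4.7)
The plan is to prove the identity by induction on the number of point blow-ups required to resolve the singularity at $P$. By the classical resolution theorem for plane curves, a finite sequence of blow-ups renders every point over $P$ smooth, so this induction terminates. For the base case, a smooth point satisfies $m_P = 1$, and the local ring $\mathcal{O}_{C,P}$ is a discrete valuation ring, hence equal to its integral closure, so $\delta_P = 0 = \binom{1}{2}$, with no infinitely near points contributing to the right-hand side.

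For the inductive step, the heart of the argument is the one-step identity
\begin{equation*}
\delta_P = \binom{m_P}{2} + \sum_{i=1}^s \delta_{Q_i},
\end{equation*}
where $\pi\colon \tilde{C}\to C$ is the blow-up at $P$ and $Q_1,\ldots,Q_s$ are the points of $\tilde{C}$ lying above $P$. Granted this, applying the induction hypothesis to each $Q_i$ (whose singularity, if any, is resolved by strictly fewer blow-ups) and noting that the infinitely near points of $P$ other than $P$ itself are exactly those infinitely near some $Q_i$ yields the theorem.

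To prove the one-step identity, I would let $\mathfrak{o} = \mathcal{O}_{C,P}$ and $\tilde{\mathfrak{o}} = (\pi_*\mathcal{O}_{\tilde{C}})_P = \bigcap_{i=1}^s \mathcal{O}_{\tilde{C},Q_i}$, viewed inside the common fraction field of $C$. Since $\tilde{\mathfrak{o}}$ is a finite birational extension of $\mathfrak{o}$ sitting inside the integral closure $\bar{\mathfrak{o}}$, the chain $\mathfrak{o}\subset \tilde{\mathfrak{o}}\subset \bar{\mathfrak{o}}$ gives the additive decomposition
\begin{equation*}
\delta_P = \dim_{\C}(\tilde{\mathfrak{o}}/\mathfrak{o}) + \dim_{\C}(\bar{\mathfrak{o}}/\tilde{\mathfrak{o}}),
\end{equation*}
and $\dim_{\C}(\bar{\mathfrak{o}}/\tilde{\mathfrak{o}}) = \sum_i \delta_{Q_i}$ by a Chinese remainder argument at the finitely many maximals of the semi-local ring $\tilde{\mathfrak{o}}$, using that normalization is birationally invariant so $\bar{\tilde{\mathfrak{o}}} = \bar{\mathfrak{o}}$. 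The main obstacle is therefore the purely local identity $\dim_{\C}(\tilde{\mathfrak{o}}/\mathfrak{o}) = \binom{m_P}{2}$. For this I would choose affine coordinates $(x,y)$ at $P$, expand the defining equation as $f = f_m + f_{m+1} + \cdots$ with $m = m_P$, and work on the chart $y = xt$ of the blow-up, on which $\tilde{\mathfrak{o}}$ is $\mathfrak{o}[t]$ modulo the strict transform $f_m(1,t) + x f_{m+1}(1,t) + \cdots$. A filtration by powers of $x$, combined with the fact that this relation rewrites $t^m$ in terms of lower $t$-powers multiplied by positive $x$-powers, identifies a $\C$-basis for $\tilde{\mathfrak{o}}/\mathfrak{o}$ indexed by pairs $(a,b)$ with $a\ge 0$ and $1\le b\le m-1-a$, producing $(m-1) + (m-2) + \cdots + 1 = \binom{m}{2}$ classes. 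The plane-curve hypothesis, i.e.\ that the ambient local ring is two-dimensional regular, is crucial to this count.
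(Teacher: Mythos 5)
The paper does not prove this statement at all --- it is quoted verbatim from Wall (Theorem 6.5.9 and its Remark) and used as a black box --- so there is no ``paper proof'' for your argument to parallel. What you have written is the standard resolution-theoretic proof of this classical fact, and its architecture is sound: induction on the depth of the resolution tree, the one-step identity $\delta_P = \binom{m_P}{2} + \sum_i \delta_{Q_i}$, the splitting of $\delta_P$ along the chain $\mathfrak{o} \subset \tilde{\mathfrak{o}} \subset \bar{\mathfrak{o}}$, the identification $\dim_{\C}(\bar{\mathfrak{o}}/\tilde{\mathfrak{o}}) = \sum_i \delta_{Q_i}$ by localizing the finite-length quotient at the maximal ideals of the semi-local ring, and the observation that points of multiplicity one contribute $\binom{1}{2}=0$ so the restriction to \emph{singular} infinitely near points is harmless. (For what it is worth, Wall's own route to 6.5.9 goes through the Milnor number and the behaviour of $\mu$ and intersection multiplicities under blow-up rather than through the semi-local ring $\pi_*\mathcal{O}_{\tilde C}$; your argument is closer to the treatment in Hartshorne V.3.9.2 or Casas-Alvero.)

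Two steps in your local computation of $\dim_{\C}(\tilde{\mathfrak{o}}/\mathfrak{o})$ need tightening. First, the single chart $y=xt$ only sees all points of $\tilde C$ over $P$ if the line $x=0$ is not in the tangent cone; you must first make a linear change of coordinates to arrange this, or else $\mathfrak{o}[t]$ is a proper subring of $\tilde{\mathfrak{o}}$. Second, and more substantively, the assertion that the strict transform $f_m(1,t)+xf_{m+1}(1,t)+\cdots$ ``rewrites $t^m$ in terms of lower $t$-powers multiplied by positive $x$-powers'' is not literally true: $f_{m+j}(1,t)$ has $t$-degree up to $m+j$, so the relation expresses $t^m$ in terms that still involve \emph{higher} powers of $t$. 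The correct statement --- that $\mathfrak{o}[t]=\mathfrak{o}+\mathfrak{o}t+\cdots+\mathfrak{o}t^{m-1}$ --- is true but requires a real argument, e.g.\ finiteness of $\pi$ on the strict transform together with Nakayama, or Weierstrass preparation to replace the strict transform by a degree-$m$ monic polynomial in $t$ up to a unit, or a careful induction on the $x$-adic order. Likewise the \emph{lower} bound (that your $\binom{m}{2}$ monomial classes are independent in $\tilde{\mathfrak{o}}/\mathfrak{o}$, so the dimension is exactly $\binom{m}{2}$ and not less) deserves a sentence using the regularity of the ambient two-dimensional local ring, which you correctly flag as essential. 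These are standard and fixable gaps, not flaws in the approach.
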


The following examples illustrate the two definitions of delta invariant.

\begin{eg}
Consider an affine plane curve analytically irreducible at the origin, isomorphic to $y^n - x^m$. Then $\gcd(n, m) = 1$. Suppose $n < m$ and $m = qn + r$ in the Euclidean algorithm sense \cite[Example 3.6.2]{wall_2004}. Then $\frac{y^j}{x^{n\ell+i}}$ for $1 \leq \ell \leq q, 1 \leq i \leq j < n$ are integral over the local ring $\mathfrak{o}_{(0,0)}$. They correspond to the first $q$ blow-up where the origin is always infinitely near of multiplicity $n$. There are $q \binom{n}{2}$ of them. Again let $n = q_1 r + r_1$ in the Euclidean algorithm sense, then $\frac{x^{i+r\ell+qj}}{y^j}$ for $1 \leq \ell \leq q_1, 1 \leq j \leq i < r$ are also integral over the local ring. They correspond to the $q_1$ blow-up afterwards where the origin is always infinitely near of multiplicity $r$. There are $q_1 \binom{r}{2}$ of them. The process continues until Euclidean algorithm terminates, and the singularity is resolved.
\end{eg}
\begin{eg}
Consider an affine plane curve isomorphic to $x^m - y^m$ at the origin. Then $\frac{y^j}{x^i}$ for $1 \leq i \leq j < m$ are integral over the local ring. The first blow up by $(x,y) \mapsto (x_1, x_1y_1)$ has $x_1=0$ as exceptional curve and $y_1 = e^{2\pi \ell \mathrm{i}/m}$ for $\ell = 0,\dots, m$ as strict transforms.
\end{eg}

\section{Singularities at Infinity}
In this section, we study the singularities of the algebraic $k$-ellipse at infinity.
We locate the singularities (Proposition \ref{prop:sing_at_inf_locations}), compute their tangent cones and hence multiplicities (Proposition \ref{prop:tangentcone}), and examine their local branches, which leads to the relation $2r_P = m_P$ (Proposition \ref{prop:branches}).

Let $R = \mathbb{C}  [u_1, v_1,\dots, u_k, v_k, r, x, y, z ]$ and $K$ be its fraction field.

\begin{prop}\label{prop:sing_at_inf_locations}
An algebraic $k$-ellipse intersects the line $z=0$ at
\begin{itemize}
\item only $[\pm \mathrm{i}: 1: 0]$ of multiplicity $2^{k-1}$ when $k$ is odd;
\item $[\pm \mathrm{i}: 1: 0]$ of multiplicity $2^{k-1} - \binom{k}{k/2}$ and $\binom{k}{k/2}$ points in quadratic extensions of $K$ when $k$ is even. The quadratic extensions are given by $r^2(x^2 + y^2) - (x \sum \sigma_i u_i + y \sum \sigma_i v_i)^2$.
\end{itemize}
\end{prop}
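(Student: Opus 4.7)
The plan is to compute $p_k(x,y,0)$ explicitly as a binary form in $x,y$ from the Galois product~(\ref{galois}) and read off the intersection points with $V(z)$ together with their multiplicities. B\'ezout's Theorem applied to $V(p_k)$ and $V(z)$ guarantees total intersection multiplicity equal to $\deg p_k$, providing a running consistency check. Writing $f_\sigma(x,y,z) := rz - \sum_{i=1}^k \sigma_i \sqrt{(x-u_iz)^2+(y-v_iz)^2}$ for $\sigma \in \{\pm 1\}^k$, each factor at $z=0$ reduces to $f_\sigma(x,y,0) = -(\sum_i \sigma_i)\sqrt{x^2+y^2}$.

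For odd $k$, $\sum_i \sigma_i$ is odd and hence nonzero for every $\sigma$. Pairing $\sigma$ with $-\sigma$ gives $f_\sigma(x,y,0)\,f_{-\sigma}(x,y,0) = -(\sum_i\sigma_i)^2(x^2+y^2)$, which is polynomial. Taking the product over all $2^{k-1}$ unordered pairs yields $p_k(x,y,0) = q_k(x,y,0) = C\,(x^2+y^2)^{2^{k-1}}$ for a nonzero constant $C$. As a binary form in $x,y$, this vanishes only at $[\pm\mathrm{i}:1:0]$, each to order $2^{k-1}$, and $2\cdot 2^{k-1}=2^k=\deg p_k$ agrees with B\'ezout.

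For even $k$, the $\binom{k}{k/2}$ balanced sign vectors ($\sum_i\sigma_i = 0$) satisfy $f_\sigma(x,y,0)=0$, so $q_k$ vanishes to order at least $\binom{k}{k/2}$ along $z=0$, consistent with~(\ref{pk}). I would Taylor expand each balanced $f_\sigma$ to first order in $z$, obtaining $\partial_z f_\sigma(x,y,0) = r + \sum_i \sigma_i(u_ix+v_iy)/\sqrt{x^2+y^2}$, so that $p_k(x,y,0) = \prod_{\sum\sigma_i\neq 0} f_\sigma(x,y,0) \cdot \prod_{\sum\sigma_i=0}\partial_z f_\sigma(x,y,0)$. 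Pairing $\sigma$ with $-\sigma$ in each product polynomializes it: the first contributes $C_1(x^2+y^2)^{2^{k-1}-\binom{k}{k/2}/2}$, while the second, after clearing the $\sqrt{x^2+y^2}$ denominators pairwise, contributes $(x^2+y^2)^{-\binom{k}{k/2}/2}\prod_{\text{pairs}}\bigl[r^2(x^2+y^2) - (x\sum_i\sigma_i u_i + y\sum_i\sigma_i v_i)^2\bigr]$. Combining gives $p_k(x,y,0) = C_1(x^2+y^2)^{2^{k-1}-\binom{k}{k/2}}\prod_{\text{pairs}}\bigl[r^2(x^2+y^2) - (x\sum_i\sigma_i u_i + y\sum_i\sigma_i v_i)^2\bigr]$. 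The first factor supplies multiplicity $2^{k-1}-\binom{k}{k/2}$ at each of $[\pm\mathrm{i}:1:0]$, and the $\binom{k}{k/2}/2$ irreducible quadratic factors each contribute two intersection points generically defined over a quadratic extension of $K$, yielding $\binom{k}{k/2}$ additional points. The degree balance $2(2^{k-1}-\binom{k}{k/2})+\binom{k}{k/2} = 2^k - \binom{k}{k/2} = \deg p_k$ confirms the formula.

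The main obstacle is polynomiality of the intermediate expressions: both $f_\sigma(x,y,0)$ and $\partial_z f_\sigma(x,y,0)$ involve $\sqrt{x^2+y^2}$ or its reciprocal. The remedy is always to pair $\sigma$ with $-\sigma$ before evaluating, since the pairwise products are polynomial by sign cancellation. Equivalently, one may treat $\tau_i := \sqrt{(x-u_iz)^2+(y-v_iz)^2}$ as formal variables subject to $\tau_i^2 = (x-u_iz)^2+(y-v_iz)^2$, making the Galois invariance of $q_k$ under $\tau_i\mapsto-\tau_i$ transparent and ensuring $q_k$, and hence $p_k$, is a polynomial in $x,y,z$.
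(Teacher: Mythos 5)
Your proposal is correct and follows essentially the same route as the paper: evaluate each Galois factor $f_\sigma$ at $z=0$ (giving $-(\sum_i\sigma_i)\sqrt{x^2+y^2}$), handle the balanced $\sigma$'s via the first-order term in $z$ (the paper phrases this as L'H\^opital applied to $f_\sigma/z$), and pair $\sigma$ with $-\sigma$ to polynomialize, arriving at the same factorization of $p_k(x,y,0)$. The explicit B\'ezout degree check is a nice addition but the argument is otherwise the paper's.
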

\begin{proof}
Consider the field extension $L$ joining $\sqrt{(x- u_i z)^2 + (y- v_i z)^2}$ to $K$ for $i=1, \ldots, k$. Evaluation at $z=0$ gives a field homomorphism from $L$ to $K(\sqrt{x^2 + y^2})$. 

For any binary vector $\sigma \in \{1, -1\}^k$, denote the sum of all entries by $|\sigma| := \sum_{i=1}^k \sigma_i$. When $k$ is odd, $|\sigma| \neq 0$ for all $\sigma \in \{\pm 1\}^k$ so $\sum_{i=1}^k \sigma_i \sqrt{x^2 + y^2} = |\sigma| \sqrt{x^2+y^2} \neq 0$. We have $p_k(x, y, 0) = \prod_{\sigma \in \{-1, +1\}^k} \sum_{i=1}^k \sigma_i \sqrt{x^2 + y^2}$. Then $p_k(x, y, 0)$ is a constant multiple of $(x^2 + y^2)^{2^{k-1}}$. This proves the case when $k$ is odd.

For each $\sigma \in \{1, -1\}^k$, let 
\[
f_\sigma (x, y, z) := rz - \sum_{i=1}^k \sigma_i \sqrt{(x - u_i z)^2 + (y - v_i z)^2}.
\]
When $k$ is even, there are $\binom{k}{k/2}$ many $\sigma$'s with $|\sigma|=0$. By L'H\^opital's rule, $\frac{f_\sigma}{z}(x, y, 0) = \lim_{z \to 0} \frac{\partial f_\sigma(x, y, z)}{\partial z}$ if limit exists. The limit exists in the affine chart $x \neq 0$. For these $\sigma$'s, compute $\frac{\partial f_\sigma}{\partial z}$ in the fraction field of $L$ and evaluate at $z=0$, we have
\[
\frac{\partial f}{\partial z}(x,y,0) = \frac{r \sqrt{x^2+y^2} + x \sum \sigma_i u_i + y \sum \sigma_i v_i}{\sqrt{x^2+y^2}}.
\]
By \cite[Theorem 1.1]{nie2008}, $p_k = z^{-\binom{k}{k/2}}\prod_\sigma f_{\sigma}$,
so 
\[p_k(x, y, 0) = M (x^2+y^2)^{2^{k-1}-\binom{k}{k/2}} \prod_{|\sigma| = 0} \left( r \sqrt{x^2+y^2} + x \sum \sigma_i u_i + y \sum \sigma_i v_i \right).
\]
holds, where $M\neq 0$ is a constant.
\end{proof}

When $k$ is even, the $\binom{k}{k/2}$ points on the line $z=0$ in quadratic extensions of $K$ are generically nonsingular.

Next we study the multiplicity and the number of local branches of the singularities at infinity. For clarity we discuss all results for the point $[-\mathrm{i}: 1: 0]$, and results for $[\mathrm{i}: 1: 0]$ follow by symmetry. 
We compute the multiplicity of $[-\mathrm{i}: 1: 0]$ by translating $[-\mathrm{i}: 1: 0]$ to the origin $[0:1:0]$ and computing the tangent cone at the origin.
%A circle passes through $[i: 1: 0]$ with multiplicity 1, with tangents $x - i y + ( v i - u) z = 0$. 
Define
\[
t_k(x, z) := \prod_{\sigma \in \{\pm1\}^k} \sum \sigma_j \sqrt{- \mathrm{i} x + (\mathrm{i} u_j - v_j) z}
\]
Then $t_k \in K$ since the product is over the orbit of $\sum \sigma_j \sqrt{- \mathrm{i} x + ( \mathrm{i} u_j - v_j) z}$ under $\Gal(L/K)$. Each factor in the product is integral over $R$ so $t_k \in R$. Due to the automorphism $\sigma \mapsto -\sigma$ which flips all digits, $t_k$ (up to sign) is a square.

\begin{prop}\label{prop:tangentcone}
Let $\mathbb{C} \{\{u_1,\dots,u_k, v_1,\dots,v_k, r, x, y, z\}\}$ be the field of Puiseux series. We consider the point $[-\mathrm{i}:1:0]$ of the algebraic $k$-ellipse in the open set $U_y$ and translate $[-\mathrm{i}:1:0]$ to the origin $[0:1:0]$. 
Then the tangent cone of the algebraic $k$-ellipse at the origin $[0 :1: 0]$ is $V(t_k)$ when $k$ is odd and $k \geq 3$; $V(z^{-\binom{k}{k/2}} t_k)$ when $k$ is even.
\end{prop}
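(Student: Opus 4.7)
The plan is to extract the lowest-degree homogeneous part of $p_k(x - \mathrm{i}, 1, z)$ in the variables $x, z$, treating $u_i, v_i, r$ as coefficients. Since $p_k$ and $q_k = \prod_\sigma f_\sigma$ differ only by $z^{\binom{k}{k/2}}$ when $k$ is even, I first expand each $f_\sigma(x - \mathrm{i}, 1, z)$ in the Puiseux field with $x, z$ both of valuation $1$ and then multiply the expansions.

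The expansion rests on the algebraic identity
\[
(x - \mathrm{i} - u_i z)^2 + (1 - v_i z)^2 = 2 L_i + R_i, \qquad L_i = -\mathrm{i} x + (\mathrm{i} u_i - v_i) z, \quad R_i = (x - u_i z)^2 + v_i^2 z^2,
\]
in which $L_i$ is of $(x,z)$-degree $1$ and $R_i$ of degree $2$. Binomial expansion yields $\sqrt{2 L_i + R_i} = \sqrt{2}\sqrt{L_i} + \varepsilon_i$ with $\operatorname{val}(\varepsilon_i) \geq 3/2$, so
\[
f_\sigma(x - \mathrm{i}, 1, z) = -\sqrt{2}\sum_{i=1}^k \sigma_i \sqrt{L_i} + \psi_\sigma, \qquad \operatorname{val}(\psi_\sigma) \geq 1
\]
(the $rz$ summand contributes valuation $1$ and the $\varepsilon_i$'s contribute at least $3/2$), whereas the displayed leading term has valuation $1/2$.

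Multiplying over all $2^k$ values of $\sigma \in \{\pm 1\}^k$, the product of the leading pieces is $(-\sqrt{2})^{2^k} t_k = 2^{2^{k-1}} t_k$, a homogeneous polynomial of $(x,z)$-degree $2^{k-1}$. Every remaining mixed term replaces at least one leading factor by a remainder of valuation $\geq 1$, giving total valuation $\geq 2^{k-1} + 1/2$. Thus $q_k(x - \mathrm{i}, 1, z) = 2^{2^{k-1}} t_k + (\text{terms of valuation} > 2^{k-1})$. For $k$ odd, $p_k = q_k$ and the tangent cone at the origin is $V(t_k)$.

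For $k$ even the remaining task is to check that $t_k$ is divisible by $z^{\binom{k}{k/2}}$, so that after dividing $q_k$ by $z^{\binom{k}{k/2}}$ the quantity $z^{-\binom{k}{k/2}} t_k$ is a genuine polynomial and equals the leading homogeneous part of $p_k(x - \mathrm{i}, 1, z)$. Switching to the $z$-adic valuation on $\mathbb{C}(x)\{\{z\}\}$ and expanding $\sqrt{L_i} = \sqrt{-\mathrm{i}x} + O(z)$ yields $\sum_i \sigma_i \sqrt{L_i} = |\sigma|\sqrt{-\mathrm{i}x} + O(z)$, so each of the $\binom{k}{k/2}$ factors of $t_k$ indexed by $|\sigma| = 0$ has $z$-valuation $\geq 1$ while the remaining factors are $z$-units. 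Hence $t_k$ has $z$-valuation $\geq \binom{k}{k/2}$, and matching leading parts in $q_k = z^{\binom{k}{k/2}} p_k$ identifies the tangent cone of $p_k$ as $V(z^{-\binom{k}{k/2}} t_k)$. The main obstacle is the bookkeeping in the product expansion: one must confirm that $t_k$ does not vanish identically for generic parameters (it does not, since the involution $\sigma \mapsto -\sigma$ exhibits $\pm t_k$ as a perfect square with generically nonzero factors) and that the branch choices for each $\sqrt{L_i}$ combine consistently into the polynomial $t_k \in R$, which is guaranteed by the Galois invariance already recorded in Section 2.
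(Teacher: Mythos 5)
Your proposal is correct and follows essentially the same route as the paper: substitute $x \mapsto x - \mathrm{i}$, $y \mapsto 1$, observe that each radicand becomes $2L_i + (\text{degree-}2\text{ terms})$ so each factor $f_\sigma$ has initial form $-\sqrt{2}\sum_i \sigma_i\sqrt{L_i}$ of degree $1/2$ (beating the $rz$ term), and conclude that the initial form of $q_k(x-\mathrm{i},1,z)$ is $2^{2^{k-1}}t_k$. Your direct verification that the $\binom{k}{k/2}$ factors with $|\sigma|=0$ each contribute a factor of $z$ is a slightly more explicit version of the paper's appeal to $\val_z(q_k)=\binom{k}{k/2}$ and the strong triangle inequality, but it is the same argument in substance.
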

\begin{proof}
We translate $[-\mathrm{i}:1:0]$ to the origin $[0:1:0]$ by making the substitution $x-\mathrm{i} \mapsto x, z \mapsto z$ in the defining polynomial $q_k(x, y, z)$ of the algebraic $k$-ellipse:
\[
q_k(x - \mathrm{i}, 1, z) = \prod_{\sigma \in \{-1, 1\}^k} \left( rz - \sum_{j=1}^k \sigma_j \sqrt{2(-\mathrm{i} x + (\mathrm{i} u_j - v_j) z) + x^2 - 2 u_j x z + (u_j^2 + v_j^2) z^2} \right).
\]
The initial form of $q_k(x - \mathrm{i}, 1, z)$, of degree $2^{k-1}$, is (a $2^{2^{k-1}}$ multiple of)
\[
t_k(x, z) = \prod_{\sigma \in \{-1, 1\}^k} \sum_{j=1}^k \sigma_j \sqrt{- \mathrm{i} x + (\mathrm{i} u_j - v_j) z}
\]
Therefore, the multiplicity of $[\mathrm{i}: 1: 0]$ is $2^{k-1}$.
The initial form $t_k$ is homogeneous in $x$ and $z$, so it factors into $2^{k-2}$ squares of linear forms.
Since we assume the algebraic $k$-ellipse is generic, there are $2^{k-2}$ distinct tangent lines at $[\mathrm{i}: 1: 0]$.

When $k$ is even, an algebraic $k$-ellipse is defined by $z^{-\binom{k}{k/2}} q_k$. Let $\val_z: P \to \mathbb{Z}$ be the valuation at the prime $z$. By the strong triangle inequality $\val_z(q_k(x-\mathrm{i},1,z)) \geq \val_z(q_k) = \binom{k}{k/2}$ so $z^{-\binom{k}{k/2}} t_k$ is a polynomial.
\end{proof}
\iffalse  
% Why delete this: just some polynomials, not meaningful.
\begin{eg}
The tangent cone at $[\mathrm{i}: 1: 0]$ of the 3-ellipses with foci $[0:0:1], [0:1:1]$ and $[1:0:1]$ is defined by $(3(x-\mathrm{i})^2-2(1-\mathrm{i})(x-\mathrm{i})z - 2\mathrm}i}z^2)^2$ in $U_y$.
%$(3(x-iy)^2-2(1-i)(x-iy)z - 2iz^2)^2$. 
The tangent cone at $[\mathrm{i}: 1: 0]$ of the 4-ellipses with foci $[0:0:1], [0:1:1], [1:0:1]$, and $[-1:-1:1]$ is defined by $(4(1+\mathrm{i})(x-\mathrm{i}) - z)^2$ in $U_y$.
%$(4(1+i)(x-iy) - z)^2$. 
The tangent cone at $[\mathrm{i}: 1: 0]$ of the 4-ellipses with foci $[0:0:1], [0:1:1], [1:0:1]$, and $[1:1:1]$ is defined by $z^2$ in $U_y$. Indeed, radius does not play a role here.
\end{eg}
\fi 

Every local branch at a point $P$ admits an analytic parametrization with a unique tangent at $P$ \cite[Lemma 2.4.4]{wall_2004}. The proof of Proposition \ref{prop:tangentcone} shows that there are $2^{k-2} = \frac{m_P}{2}$ distinct tangent lines at $[-\mathrm{i}: 1: 0]$, so there are at least $\frac{m_P}{2}$ branches at $[-\mathrm{i}: 1: 0]$. We would like to show there are at most that many.
\begin{prop}\label{prop:branches}
The $k$-ellipse factors into a ``bouquet of cusps" at $P = [-\mathrm{i}: 1: 0]$. As a consequence, $r_P = \frac{m_P}{2}$.
\end{prop}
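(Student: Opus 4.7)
The plan is to combine the tangent cone analysis of Proposition~\ref{prop:tangentcone} with an explicit Puiseux parametrization of each local branch at $P$, translated to the origin. By that proposition, the tangent cone is a perfect square of a polynomial whose (generically) distinct linear factors give $m_P/2$ tangent lines at $P$, each appearing with multiplicity $2$ in the tangent cone. Since each local branch has a unique tangent direction and the multiplicities of branches tangent to a given line $L$ sum to the multiplicity of $L$ in the tangent cone, every tangent line carries either (i)~a single branch of multiplicity $2$ (a cusp) or (ii)~two smooth branches (a tacnode configuration); the claim reduces to ruling out (ii).

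To rule out (ii), I would construct, for each tangent line $L$, a Puiseux parametrization of a multiplicity-$2$ branch tangent to $L$. The factorization $q_k = \prod_\sigma f_\sigma$ with $f_\sigma = rz - \sum_j \sigma_j \sqrt{B_j}$ (where $B_j = 2(-\mathrm{i}x + \alpha_j z)+$ higher order and $\alpha_j = \mathrm{i}u_j - v_j$), combined with the tangent cone factorization, singles out a unique pair $\{\sigma, -\sigma\}$ of sign sequences whose leading factor $\sum_j \sigma_j \sqrt{-\mathrm{i}x + \alpha_j z}$ vanishes identically on $L$. Fix such a $\sigma$ and pick $(x_0, z_0) \in L \setminus \{0\}$ with $x_0 \neq 0$ (possible generically), then seek
\[
x(t) = x_0 t^2, \qquad z(t) = z_0 t^2 + z_1 t^3 + z_2 t^4 + \cdots
\]
along which $f_\sigma(x - \mathrm{i}, 1, z)$ vanishes. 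The condition $\sum_j \sigma_j \sqrt{d_j} = 0$ (with $d_j = -\mathrm{i}x_0 + \alpha_j z_0$) kills the $t$-coefficient of $\sum_j \sigma_j \sqrt{B_j(t)}$, so $f_\sigma(t) = O(t^2)$. At each subsequent order $t^n$ for $n \geq 2$, setting $f_\sigma(t) = 0$ gives a single linear equation in the new unknown $z_{n-1}$ whose leading coefficient is proportional to $\sum_j \sigma_j \alpha_j / \sqrt{d_j}$; solving recursively would yield a convergent Puiseux series parametrizing a cusp branch.

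Each such multiplicity-$2$ branch exhausts the tangent-cone multiplicity of $L$, precluding any other branch tangent to $L$; summing over all $m_P/2$ tangent lines gives exactly $m_P/2$ cusp branches at $P$, the bouquet structure. The hard part will be justifying the Puiseux recursion at every order, specifically that $\sum_j \sigma_j \alpha_j / \sqrt{d_j} \neq 0$ (and its analogues at higher orders) hold for every tangent line in the generic configuration. This should follow from the overall genericity of the foci by Zariski openness, most easily by exhibiting a single explicit configuration where all nondegeneracy conditions hold.
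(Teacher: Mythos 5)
Your overall strategy matches the paper's: pair the conjugate factors $f_\sigma$ and $f_{-\sigma}$, note that each tangent line $L=V(x-az)$ of the tangent cone corresponds to a pair with $\sum_j\sigma_j\sqrt{-\mathrm{i}a+\alpha_j}=0$, and exhibit a Puiseux parametrization of the corresponding local branch containing a half-integer (equivalently, odd-order in $t$) term. Your framing of the dichotomy to be resolved --- one cusp versus two smooth branches sharing the tangent $L$ --- is also exactly right.

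The gap is that your construction, as written, does not resolve that dichotomy. Solving the recursion for $x=x_0t^2$, $z=z_0t^2+z_1t^3+\cdots$ produces \emph{some} parametrization of a branch tangent to $L$, but it is a multiplicity-$2$ cusp branch only if the parametrization is \emph{primitive}, i.e.\ only if some odd coefficient $z_{2j+1}$ is nonzero. If every odd coefficient vanished, your series would be a power series in $t^2$, hence a reparametrized smooth branch of multiplicity $1$, and alternative (ii) would not be excluded --- you would merely have found one of the two smooth branches of a tacnode. You assert ``parametrizing a cusp branch'' without establishing this, and you flag the solvability condition $\sum_j\sigma_j\alpha_j/\sqrt{d_j}\neq 0$ as the hard part, whereas the decisive point lies elsewhere: the order-$t^2$ equation of your own recursion reads
\[
rz_0 \;=\; z_1\sum_j\sigma_j\frac{\alpha_j}{\sqrt{2d_j}},
\]
so $z_1\neq 0$ precisely because $r>0$ (and $z_0\neq 0$, since $z=0$ is generically not a tangent line at $P$). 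This is the only place the hypothesis $r>0$ enters, and it is the analogue of the paper's Newton-polygon trichotomy on the leading exponent $m$ of $x_2$ as a Puiseux series in $z$, where the smooth-branch case $m>\tfrac12$ is excluded exactly because the term $r^2z$ cannot be cancelled when $r\neq 0$. Adding this one computation closes your argument; the branch count over the $m_P/2$ tangent lines then goes through as you describe.
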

By \cite[Theorem 2.4.3]{wall_2004} the field of univariate Puiseux series is algebraically closed. Therefore $q(x-\mathrm{i}, 1, z) = 0$ admits solutions of the form $x = p(z)$ in $\mathbb{C}\{\{z\}\}$ or $z = p(x)$ in $\mathbb{C}\{\{x\}\}$. We analyze solutions that give a cusp.

Recall that for a point on an algebraic curve $C$, if $m_P = 2$ and $P$ has only one tangent $L$, $P$ is called a cusp if $I(P, C \cap L) \geq 3$; an ordinary cusp if $I(P, C \cap L) = 3$. In local coordinates $x, y$, an ordinary cusp admits two solutions in the field of Puiseux series of the form $y = x f(x) + x^{3/2} g(x) = x (f(x) + \sqrt{x}g(x)), y = x f(x) - x^{3/2} g(x) = x (f(x) - \sqrt{x}g(x))$ for some units $f, g$, i.e., polynomials with a non-zero constant term, in the ring of power series $\mathbb{C}\llbracket x \rrbracket$. The constant term of $f$ gives the tangent at $(0,0)$, so the two solutions share tangents.
\begin{proof}
We translate $P = [-\mathrm{i}: 1: 0]$ to the origin $[0:1:0]$ as before.
We are going to show that any tangent line $V(x-az)$ at the origin corresponds to a cusp, by showing that $x$ can be written in the form of $x = z(f(z) + \sqrt{z}g(z))$.
Since we have $\frac{m_P}{2}$ distinct tangents with multiplicity 2 each, if we found $m_P$ parametrizations where every distinct tangent appears as the tangent of two parametrizations, we have found all parametrizations and the proof will be complete.
We view $q_k(x-\mathrm{i}, 1, z)$ as a polynomial in $x$ with coefficients in the field of Puiseux series about $z$. 
Write $x = x_1 z$.
\[
q_k(x_1 z - \mathrm{i}, 1, z) = z^{2^{k-1}}\prod_{\sigma \in \{-1, 1\}^k} \left( r\sqrt{z} -  \varphi_\sigma(x_1,z) \right)
\]
where 
\[
\varphi_\sigma(x, z) = \sum_{j=1}^k \sigma_j \sqrt{2(- \mathrm{i} x + (\mathrm{i} u_j - v_j) ) + x^2 z - 2 u_j x z + (u_j^2 + v_j^2) z}.
\]
%Since we assumes the algebraic $k$-ellipse is generic and hence the constant term of $\varphi_\sigma$ is non-zero, $\varphi_\sigma$ is a unit in $\mathbb{C}\llbracket x, z \rrbracket$.
By writing $x_1 = x_2 + a$, we have
\[
q_k((x_2+a)z - \mathrm{i}, 1, z) = z^{2^{k-1}}\prod_{\sigma \in \{-1, 1\}^k} \left( r\sqrt{z} -  \varphi_\sigma(x_2+a, z) \right)
\]
Since $V(x-az)$ is a tangent line, $(x-az)$ is a factor of $t_k$, so $(a,1) \in V(t_k)$, i.e., $\prod_\sigma \sum \sigma_j \sqrt{(\mathrm{i} u_j - v_j) - \mathrm{i} a} = 0$. 
For some $\tilde{\sigma}$, $\sum \tilde{\sigma}_j \sqrt{(\mathrm{i} u_j - v_j) - \mathrm{i} a} = 0$.
This term is the constant term of the Taylor expansion of $\varphi_{\tilde{\sigma}} (x_2+a, z)$ at $(0,0)$, and hence the Taylor expansion of $\varphi_{\tilde{\sigma}}(x_2+a, z)$ at $(0,0)$ is of the form $\sum_{i+j \geq 1, i,j \in \N} a_{ij} x_2^i z^j$. 
We may assume generically that $a_{10} \neq 0$, so $x_2^2$ appears with nonzero coefficient. 
Thus, $(r\sqrt{z} -  \varphi_{\tilde{\sigma}}(x_2+a, z)) (r\sqrt{z} -  \varphi_{-\tilde{\sigma}}(x_2+a, z)) = r^2 z - \varphi_{\tilde{\sigma}}(x_2+a, z)^2$ has no constant term. 
By \cite[Theorem 2.1.1]{wall_2004}, $r^2 z - \varphi_{\tilde{\sigma}}(x_2+a, z)^2$ admits a solution of the form $z = t^n, x_2 = \sum_{r=1}^\infty \alpha_r t^r$, and \cite[Corollary 2.4.2]{wall_2004} ensures that all solutions assume this form.
%In the expression of $q_k((x_2+a)z - i, 1, z)$, the power of $z$ is $2^{k-1}$ and hence $r\sqrt{z} -  \varphi_{\tilde{\sigma}}(x_2+a, z)$ cannot have extra factors of $z$, otherwise it would contradict the multiplicity of the curve at infinity $(z = 0)$. %what is meant by "power of z".
Write $x_2 = b_m z^m + p(z)$, where $m = r/n$, and $p(z)$ represents higher order terms.
$r^2 z - \varphi_{\tilde{\sigma}}(x_2+a, z)^2 = r^2 z - \sum_{i+j \geq 2 \text{ and } i, j \in \N} c_{ij} x_2^i z^j = r^2z - \sum_{i+j \geq 2 \text{ and } i, j\in \N} (b_m z^m + p(z))^i z^j = r^2z - (\sum_{i+j \geq 2 \text{ and } i, j \in \N} d_{ij} z^{im+j}$ + higher order terms), for some coefficients $c_{ij}, d_{ij}$.
The minimum degree of $z$ is among the indices $\{i m+j \mid i+j\geq 2 \text{ and } i, j\in \N \} \cup \{1\}$. If $m < \frac{1}{2}$, the minimum degree term of $z$ appearing is $d_{20}z^{2m}$ which we have assumed to be nonzero, contradiction. If $m=\frac{1}{2}$, the minimum degree term of $z$ is $r^2z - d_{20}z$ so we set $d_{20} = r^2$. If $m > \frac{1}{2}$, the minimum degree term of $z$ is $r^2z$ where we have assumed $r > 0$, contradiction.

Thus we may choose a suitable branch of $\sqrt{z}$ such that $x = z(a+\sqrt{z}p(z))$ for some Puiseux series $p(z)$ parametrizes $q_k(x_2+a - \mathrm{i}, 1, z)$ at $(0,0)$.
\end{proof}

\begin{prop}\label{prop:sing_at_inf_no_infly_near_sing}
The point $P=[-\mathrm{i}: 1: 0]$ has no infinitely near singular point.
\end{prop}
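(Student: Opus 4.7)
The plan is to perform a single blow-up of $P$ and show that the strict transform is smooth at every point of the exceptional divisor $E$; by the definition in Section \ref{section:preliminaries}, this is equivalent to $P$ having no infinitely near singular point.

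I would first sharpen Proposition \ref{prop:branches} to the statement that each of the $r_P$ local branches at $P$ is an \emph{ordinary} cusp. Translate $P$ to the origin of $U_y$. In the notation of the proof of Proposition \ref{prop:branches}, the branch with tangent $V(x-az)$ admits the parametrization
\[
(x,z) = \bigl(az + x_2 z,\ z\bigr) = \bigl(at^2 + \alpha t^3 + O(t^4),\ t^2\bigr),
\]
where $t = \sqrt{z}$ and $x_2 = \alpha t + O(t^2)$. The relation $d_{20} = r^2$ in that proof determines $\alpha$ up to sign and forces $\alpha \neq 0$ generically. Pulling the tangent equation back along the parametrization gives $x - az = \alpha t^3 + O(t^4)$, so $I(P,\, C \cap V(x-az)) = 3$ and the cusp is indeed ordinary.

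Next, I would blow up in the affine chart $(x,s)$ with $z = sx$, so that $E = \{x=0\}$ is coordinatized by $s$. The $r_P$ distinct tangent directions $V(x - az)$ correspond to the $r_P$ distinct points $\{s = 1/a\}$ on $E$, hence the cusps in the bouquet lift to $r_P$ mutually disjoint local branches on $E$. For the cusp above, $s = z/x = 1/a - (\alpha/a^2)\, t + O(t^2)$ has valuation $1$ in $t$ while $x$ has valuation $2$, so the lifted branch is smooth at $(0, 1/a)$ and meets $E$ tangentially with intersection multiplicity $2$, exactly the standard desingularization picture for an ordinary cusp. The complementary chart $x = s'z$ only requires inspection near $s' = 0$, corresponding to the direction $V(z)$; but $V(z)$ is generically not a tangent to the algebraic $k$-ellipse at $P$, since $t_k(x,0)$ is a nonzero constant multiple of $x^{2^{k-1}}$ for odd $k$, and for even $k$ the factor $z^{\binom{k}{k/2}}$ is exactly what gets cancelled in $z^{-\binom{k}{k/2}} t_k$.

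Consequently the strict transform is nonsingular at every point of $E$, and therefore $P$ has no infinitely near singular point. The one step needing care is the genericity claim $\alpha \neq 0$ in the Puiseux expansion (equivalently, $a_{10} \neq 0$ in the proof of Proposition \ref{prop:branches}); this is a Zariski open condition on the configuration $(u_i, v_i, r)$ and is subsumed by the standing generic $k$-ellipse assumption.
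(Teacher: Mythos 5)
Your proof is correct and follows essentially the same route as the paper: Proposition \ref{prop:branches} gives a bouquet of ordinary cusps with distinct tangents (ordinariness coming from the relation $d_{20}=r^2$, exactly as you note), and a single blow-up turns each ordinary cusp into a smooth branch meeting the exceptional curve at the point determined by its tangent, so distinct tangents give disjoint smooth branches and no infinitely near singular point. One bookkeeping slip: in the chart $x=s'z$ the point $s'=0$ corresponds to the tangent direction $V(x)$, not $V(z)$, so the genericity check you want there is $x\nmid t_k$, i.e.\ $t_k(0,z)$ is a nonzero multiple of $z^{2^{k-1}}$ (which holds generically; and even if $V(x)$ were a tangent, the same ordinary-cusp resolution would apply in that chart), so the conclusion is unaffected.
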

\begin{proof}
An ordinary cusp with tangent $y=ax$ admits a local parametrization of the form $x=t^2, y=at^2+bt^3+$higher order terms for some nonzero $b$ \cite[Theorem 2.1.1]{wall_2004}. It blows up to $x=t^2, y=a+bt+$higher order terms which intersects the exceptional curve $x=0$ at $(0,a)$, corresponding to its tangent. Translate the point $(0,a)$ to the origin, the blow-up parametrizes to $x=Uy^2$ for some unit $U \in \mathbb{C}\llbracket t \rrbracket$, which is smooth at the origin. Thus, there are no more singularities after one blow up.

A bouquet of cusps with distinct tangents intersects the exceptional curve at different coordinates, as the above paragraph shows. Multiple branches do not produce more singularities after one blow up, so the point $P$ has no infinitely near singular point.
\end{proof}

%\begin{dfn}[\cite{wall_2004}, Chapter 2]
%The \textit{Newton diagram} of $f \in \mathbb{C} \llbracket x, y \rrbracket$ is the convex hull of the regions above and to the right of the points $(r, s)$ with $a_{(r, s)} \neq 0$. The union of those segments on its boundary which do not lie on the coordinate axes is the \textit{Newton polygon} of $f$.
%\end{dfn}

%\begin{thm}[\cite{wall_2004}, Theorem 2.1.1]
%Any equation $f(x, y) = 0$, where $f$ is a polynomial with $f(0, 0) = 0$ or more generally $f \in \mathbb{C} \llbracket x, y \rrbracket$ with zero constant term, admits at least one solution in formal power series of the form $x = t^n, y = \sum_1^\infty a_r t^r$ for some $n \in \mathbb{N}$.
%\end{thm}
%\begin{lem}[\cite{wall_2004}, Lemma 2.4.4]\label{lemma:ntpoly}
%Suppose the $i^{\text{th}}$ edge of the Newton polygon of $f$ corresponds to an increment $k_i$ in the first coordinate and a decrement of $n_i$ in the second. Then the curve $C$ defined by $f=0$ is the union of pieces $B_i$, of multiplicities $\min(k_i, n_i)$, and such that each branch of $B_i$ has Puiseux series of the form $y = ax^{n_i/k_i}+\text{higher terms, with } a \neq 0$, and correspondingly $f = \prod f_i$ where the Newton polygon of $f_i$ is a single edge from $(0, k_i)$ to $(n_i, 0)$.
%\end{lem}
\section{Affine Singularities}
In this section, we characterize all singularities that are not at infinity. Recall $p_k$ (\ref{pk}) denotes a homogeneous polynomial that defines an algebraic $k$-ellipse.
\begin{dfn}	
A $k$-ellipse with zero radius is called degenerate.
\end{dfn}	
\begin{prop}\label{prop:degdeg}
The irreducible polynomial defining a degenerate $k$-ellipse is $p_k^{1/2}$.
\end{prop}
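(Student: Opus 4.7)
The plan is to first show that $p_k|_{r=0}$ admits a polynomial square root, and then to establish irreducibility of that square root. Starting from the Galois product formula (\ref{galois}), I set $r=0$ and pair $\sigma$ with $-\sigma$; each such pair contributes
\[
\left(-\sum_{i=1}^k \sigma_i \lambda_i\right)\left(\sum_{i=1}^k \sigma_i \lambda_i\right) = -\left(\sum_{i=1}^k \sigma_i \lambda_i\right)^2,
\]
where I write $\lambda_i := \sqrt{(x-u_iz)^2+(y-v_iz)^2}$ for brevity. Since there are $2^{k-1}$ such pairs and $(-1)^{2^{k-1}}=1$ for $k\geq 2$,
\[
q_k|_{r=0} = g^2, \qquad g := \prod_{[\sigma]} \sum_{i=1}^k \sigma_i \lambda_i,
\]
the product ranging over a system of coset representatives for $\{\pm1\}^k/\{\pm1\}$ (for instance, the $\sigma$ with $\sigma_1 = 1$).

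Next, I verify $g \in R$. The Galois group $G = (\Z/2)^k$ acts on $L/K$ by $\tau\cdot\lambda_i = \tau_i\lambda_i$. With the chosen representatives, applying $\tau$ permutes them and introduces exactly $0$ sign flips when $\tau_1 = 1$, and exactly $2^{k-1}$ sign flips when $\tau_1 = -1$ (in the latter case every $\sigma\tau$ has first coordinate $-1$ and must be renamed by its negation). Both totals are even for $k \geq 2$, so $\tau(g) = g$, placing $g \in K$. Since $g^2 \in R$ and $R$ is a UFD, $g \in R$.

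For even $k$, I must absorb the factor $z^{-\binom{k}{k/2}}$ appearing in (\ref{pk}). The L'H\^opital computation from the proof of Proposition \ref{prop:sing_at_inf_locations} shows that each factor $\sum\sigma_i\lambda_i$ with $|\sigma|=0$ vanishes to order exactly one along $V(z)$, while factors with $|\sigma|\neq 0$ do not vanish there. There are $\binom{k}{k/2}/2$ cosets $[\sigma]$ with $|\sigma|=0$, so $g$ is divisible by $z^{\binom{k}{k/2}/2}$ but not by $z^{\binom{k}{k/2}/2+1}$. Setting $h := g/z^{\binom{k}{k/2}/2} \in R$ then gives $p_k|_{r=0} = z^{-\binom{k}{k/2}}g^2 = h^2$. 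For odd $k$, take $h := g$.

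The main obstacle is showing $h$ is irreducible in $R_0 := \C[u_1, v_1, \ldots, u_k, v_k, x, y, z]$. My approach is by Galois descent: if $h = h_1 h_2$ with $h_1, h_2 \in R_0$ of positive degree, then passing to the extension $R_0[\lambda_1, \ldots, \lambda_k]$, the factorization $h = \prod_{[\sigma]} \alpha_\sigma$ with $\alpha_\sigma := \sum_i \sigma_i\lambda_i$ would force each $h_i$ to be a product of some of the $\alpha_\sigma$, producing a nontrivial partition of $\{\pm 1\}^k/\{\pm 1\}$ invariant under $G/\{\pm 1\}$. But $G/\{\pm 1\}$ acts on $\{\pm 1\}^k/\{\pm 1\}$ transitively by left multiplication, so no such partition exists. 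Making this descent fully rigorous requires verifying primeness of the $\alpha_\sigma$'s in the extension ring; alternatively, one invokes Zariski-openness of irreducibility over the configuration space and verifies irreducibility by direct computation for a single specific choice of foci (for instance, for $k=3$ with foci $(0,0), (1,0), (0,1)$, one computes $h = -3(x^2+y^2)^2 + 4(x^2+y^2)(x+y)z - 8xyz^2$, whose only candidates for linear factors $x\pm\mathrm{i}y$ are ruled out by direct substitution, and a case analysis rules out factoring into two conics).
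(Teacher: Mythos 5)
Your argument is correct in substance and rests on the same two pillars as the paper's (very terse) proof: the observation that $f_\sigma=-f_{-\sigma}$ at $r=0$, and irreducibility via transitivity of the Galois action on the factors (which the paper delegates to the proof of Lemma~2.1 of Nie--Parrilo--Sturmfels). Where you genuinely differ is the polynomiality step: the paper asserts that $p_k^{1/2}$ is a polynomial ``by induction on $k$'' with no further detail, whereas you exhibit the square root explicitly as $g=\prod_{[\sigma]}\sum_i\sigma_i\lambda_i$ over coset representatives, verify Galois-invariance by counting sign flips, and conclude $g\in R$ from integral closedness of a UFD; your valuation count $\val_z(g)=\tfrac12\binom{k}{k/2}$ for even $k$ is likewise correct (each factor with $|\sigma|=0$ vanishes to order exactly one along $V(z)$, generically) and more explicit than anything in the paper. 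This buys a self-contained, checkable construction of $p_k^{1/2}$, and your quartic for $k=3$ is indeed the right polynomial. On irreducibility, the gap you flag --- that descending a factorization of $h$ in $R_0$ to a $G$-stable partition of the cosets requires the $\alpha_\sigma$ to behave like non-associate primes in the (non-normal, possibly non-UFD) extension ring --- is real, but the paper's own proof does not address it either; note that one cannot simply specialize irreducibility of the nondegenerate $p_k$ to $r=0$ (the constant term in $r$ of an irreducible polynomial need not be irreducible), so some transitivity-on-cosets argument of the kind you sketch is genuinely needed here. Your fallback via Zariski-openness of reducibility plus one explicit witness is sound in principle, but as written it only certifies the case $k=3$.
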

\begin{proof}

When $r = 0$, $f_{\sigma} = -f_{-\sigma}$ for any $\sigma \in \{\pm 1\}^k$. It can be proved that $p^{1/2}_k$ is a polynomial by induction on $k$. The polynomial $p^{1/2}_k$ vanishes on the $k$-ellipse. By the same arguments as in the proof of \cite[Lemma 2.1]{nie2008}, $p^{1/2}_k$ is irreducible.
\end{proof}

\begin{cor}\label{cor:degdeg}
Let $d_k', m_k'$ denotes the degree and multiplicity at $[\pm \mathrm{i}: 1: 0]$ of a generic degenerate $k$-ellipse, then $d_k' = \frac{d_k}{2}$ and $m_k' = \frac{m_k}{2}$.
\end{cor}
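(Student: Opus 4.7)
My plan is to deduce both claims from Proposition \ref{prop:degdeg} together with the observation that the tangent cone computation in Proposition \ref{prop:tangentcone} is insensitive to the value of $r$.

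The degree statement is immediate: by Proposition \ref{prop:degdeg}, a generic degenerate $k$-ellipse is cut out by the irreducible polynomial $p_k^{1/2}$, so
\[
d_k' = \deg(p_k^{1/2}) = \tfrac{1}{2}\deg(p_k) = d_k/2.
\]

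For the multiplicity $m_k'$, the key observation is that the initial form $t_k(x,z)$ of $q_k(x-\mathrm{i},1,z)$ computed in the proof of Proposition \ref{prop:tangentcone} contains no $r$. Indeed, $r$ enters each factor of $q_k$ only through the term $rz$, which has $(x,z)$-degree $1$, while the dominant contributions to the Puiseux expansions of the square-root terms $\sqrt{2(-\mathrm{i} x + (\mathrm{i} u_j - v_j)z) + O(x^2,xz,z^2)}$ have $(x,z)$-degree $\tfrac12$. Hence the same $t_k$ (respectively $z^{-\binom{k}{k/2}}t_k$ when $k$ is even) of $(x,z)$-degree $m_k$ is the initial form after setting $r=0$, and so the multiplicity of $p_k|_{r=0}$ at $P=[\pm\mathrm{i}:1:0]$ is still $m_k$.

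To conclude, I would use Proposition \ref{prop:degdeg} in the form $\pm\,p_k|_{r=0}=(p_k^{1/2}|_{r=0})^2$. After translating $P$ to the origin, write the lowest nonzero homogeneous part of $p_k^{1/2}|_{r=0}$ as $h$ of degree $m_k'$; its square $\pm h^2$ has degree $2m_k'$ and must coincide with the initial form of $p_k|_{r=0}$, which has degree $m_k$ by the previous paragraph. Thus $2m_k'=m_k$, i.e.\ $m_k'=m_k/2$. The only point that needs verification is the $r$-independence of $t_k$, which is settled directly from the Puiseux expansions already written down in the proof of Proposition \ref{prop:tangentcone}; the remainder is a one-line comparison of leading terms.
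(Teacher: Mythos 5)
Your argument is correct and is exactly the reasoning the paper leaves implicit: Proposition \ref{prop:degdeg} gives $\pm\, p_k|_{r=0}=(p_k^{1/2})^2$, and the $r$-independence of the initial form $t_k$ from Proposition \ref{prop:tangentcone} (the term $rz$ has degree $1$ while the square-root terms contribute degree $\tfrac12$) yields $m_k'=m_k/2$ by comparing leading forms of $h$ and $h^2$. The one small point you leave unchecked is the analogous statement for the degree: you write $d_k'=\tfrac12\deg(p_k)$, but what you actually get is $\tfrac12\deg(p_k|_{r=0})$, so you must verify that specializing $r=0$ does not lower $\deg p_k$. For $k$ odd the top form of $\tilde p_k$ is a constant multiple of $(x^2+y^2)^{2^{k-1}}$ and does not involve $r$; for $k$ even the formula for $p_k(x,y,0)$ in Proposition \ref{prop:sing_at_inf_locations} shows that at $r=0$ the top form becomes a constant multiple of $(x^2+y^2)^{2^{k-1}-\binom{k}{k/2}}\prod_{|\sigma|=0}\bigl(x\sum\sigma_iu_i+y\sum\sigma_iv_i\bigr)$, which is nonzero for generic foci, so the degree is preserved and $d_k'=d_k/2$ follows.
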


We will show that the affine singularities of a $k$-ellipse are characterized by some corresponding degenerate $j$-ellipses for $0 \leq j < k$. The following example provides some intuition for the degeneracy.

\begin{eg}
A degenerate 2-ellipse looks like a bisecting normal of the two foci. Indeed, $\sqrt{x^2 + y^2} = \sqrt{(x-1)^2 + (y-2)^2}$ contains all points whose distance to (0,0) equals to whose distance to (1,2), defined by $2x + 4y = 5$.
\end{eg}

\iffalse
We need the following lemma to prove the main theorem that counts the number of affine singularities.

\begin{lem}[Euler's Relation, \cite{kirwan_1992}]\label{lem:eulerrelation}
If $f(x, y, z)$ is a homogeneous polynomial of degree $m$ then
\[
x \frac{\partial f}{\partial x}(x, y, z) + y \frac{\partial f}{\partial y}(x, y, z) + z \frac{\partial f}{\partial z}(x, y, z) = m f(x, y, z).
\]
\end{lem}
\fi 

\begin{thm}\label{thm:affinesing}
Consider an algebraic $k$-ellipse with foci $F = \{(u_i, v_i) \}_{i=1}^k$ and radius $r>0$. For any ordered partition of $F$ into disjoint $F_j \bigsqcup F_{k-j}$ ($2 \leq j \leq k-1$), consider the degenerate $j$-ellipse with foci $F_j$ and the $(k-j)$-ellipse with foci $F_{k-j}$ and radius $r$. Their intersections in the affine plane $z \neq 0$ are singularities of the $k$-ellipse with foci $F$. Conversely, all affine singularities of the $k$-ellipse arise in this way.
\end{thm}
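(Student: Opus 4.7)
The plan is to work in the affine chart $z=1$, where $p_k(x,y,1) = \prod_{\sigma \in \{\pm 1\}^k} f_\sigma(x,y)$ with $f_\sigma = r - \sum_i \sigma_i s_i$ and $s_i = \sqrt{(x-u_i)^2+(y-v_i)^2}$ (for even $k$ the factor $z^{-\binom{k}{k/2}}$ is trivial on $z=1$). The heart of the argument is a clean bijection: a generic affine point $P$ of the $k$-ellipse is singular if and only if at least two of the analytic branches $f_\sigma$ vanish at $P$. The ``if'' direction is the product rule applied to $p_k$: every term in a partial derivative of the product carries some $f_\sigma$ or $f_{\sigma'}$ as a factor, so $\nabla p_k(P)=0$. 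The ``only if'' direction says that if exactly one $f_{\sigma_0}$ vanishes, then locally $p_k = f_{\sigma_0}\cdot g$ with $g(P)\neq 0$, so a singularity forces $\nabla f_{\sigma_0}(P)=0$; but $\nabla f_{\sigma_0} = -\sum_i \sigma_i \hat{e}_i$ (unit vectors from $P$ to the foci), and $f_{\sigma_0}=0$ together with the two components of this vanishing is a codimension-3 condition that generically has no solution.

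Given this equivalence, suppose $f_\sigma(P)=f_{\sigma'}(P)=0$ for $\sigma\neq\sigma'$, and let $F_j=\{i:\sigma_i\neq\sigma'_i\}$, $F_{k-j}=\{i:\sigma_i=\sigma'_i\}$. A direct computation gives
\begin{align*}
f_\sigma + f_{\sigma'} &= 2r - 2\sum_{i\in F_{k-j}} \sigma_i s_i,\\
f_\sigma - f_{\sigma'} &= -2\sum_{i\in F_j} \sigma_i s_i,
\end{align*}
so simultaneous vanishing is equivalent to one sign-pattern of the $(k-j)$-ellipse relation $\sum_{F_{k-j}} \sigma_i s_i = r$ together with one sign-pattern of the degenerate $j$-ellipse relation $\sum_{F_j} \sigma_i s_i = 0$. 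Taking products over sign patterns, $P$ lies on the algebraic $(k-j)$-ellipse with foci $F_{k-j}$ and radius $r$, and on the degenerate $j$-ellipse with foci $F_j$ (by Proposition~\ref{prop:degdeg} cut out by $p_j^{1/2}$). Conversely, any such intersection point admits sign patterns $\tau$ on $F_j$ and $\tau'$ on $F_{k-j}$ realizing these vanishings; concatenating $(\tau,\tau')$ and $(-\tau,\tau')$ produces two $\sigma,\sigma' \in\{\pm 1\}^k$ with $f_\sigma(P)=f_{\sigma'}(P)=0$, hence $P$ is singular by Step~1.

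It remains to justify the exclusions $j\in\{0,1,k\}$. The case $j=0$ is empty because it forces $\sigma=\sigma'$. The case $j=k$ requires $r=0$, excluded by genericity ($r\neq 0$). For $j=1$, the condition $\sum_{F_j} \sigma_i s_i=0$ reads $s_{i_0}(P)=0$, i.e.~$P$ is the focus $(u_{i_0},v_{i_0})$; generically a focus does not satisfy $\sum_{i\neq i_0} \sqrt{(u_{i_0}-u_i)^2+(v_{i_0}-v_i)^2}=r$, so such $P$ is not on the $k$-ellipse.

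The main obstacle is the genericity step inside Step~1: one must verify that for generic foci the individual smooth factor $f_{\sigma_0}$ is nonsingular along its zero set, and that no ``accidental'' coincidence occurs where a point lies on the $k$-ellipse while $\sum_i \sigma_i \hat{e}_i(P)=0$. I would handle this by dimension counting in the total space $\mathrm{Spec}\,\mathbb{C}[u_1,v_1,\dots,u_k,v_k,r,x,y]$, showing that the locus $\{f_{\sigma_0}=\partial_x f_{\sigma_0}=\partial_y f_{\sigma_0}=0\}$ projects to a proper subvariety of the configuration space. A secondary care-point is that different partitions $F_j\sqcup F_{k-j}$ should produce distinct singular points for generic parameters, which is also a codimension argument on the configuration space and will be needed for the counting in Section~4.
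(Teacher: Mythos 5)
Your overall strategy is the same as the paper's (singular $\Leftrightarrow$ at least two analytic branches $f_\sigma$ vanish, then decompose $f_\sigma\pm f_{\sigma'}$ into a degenerate $j$-ellipse and a $(k-j)$-ellipse), but there is a genuine gap in how you handle the case $j=1$, and it infects your Step~1 as well. Over $\mathbb{C}$ the condition $s_{i_0}(P)=0$ does \emph{not} say that $P$ is the focus: it says $P$ lies on the degenerate conic $(x-u_{i_0})^2+(y-v_{i_0})^2=0$, i.e.\ on the two isotropic lines through the focus. These lines meet the $(k-1)$-ellipse with the remaining foci and radius $r$ in finitely many points, and every such point lies on the $k$-ellipse with \emph{two} vanishing factors $f_\sigma, f_{\sigma'}$ (flipping the sign $\sigma_{i_0}$ does nothing when $s_{i_0}=0$). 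So your claim that such $P$ is not on the $k$-ellipse is false, and by your Step~1 these points would be singular --- contradicting the theorem, which excludes $j=1$. The resolution is that your ``if'' direction (product rule $\Rightarrow$ singular) is only valid when every $f_\sigma$ is analytic with finite gradient at $P$; the gradient of $f_\sigma$ has a pole along each conic $d_i^2=(x-u_i)^2+(y-v_i)^2=0$, so the Leibniz expansion is inapplicable exactly at the $j=1$ points. Indeed there $f_\sigma f_{\sigma'}=g^2-d_{i_0}^2$ with $g$ analytic and $g(P)=0$, so $\nabla(f_\sigma f_{\sigma'})(P)=-\nabla(d_{i_0}^2)(P)\neq 0$ and these points are \emph{smooth} on the $k$-ellipse. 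The paper handles both issues explicitly: for $2\leq j\leq k-1$ it invokes genericity (three generic curves share no common point) to guarantee the intersection avoids all conics $d_i^2=0$, so the Leibniz argument is legitimate; and for $j=1$ it differentiates the polynomial $f_\sigma f_{\sigma'}$ directly to show non-singularity. You need both of these additions.

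Two smaller points. First, in your ``only if'' direction, the factorization $p_k=f_{\sigma_0}\cdot g$ with $g(P)\neq 0$ tacitly uses that $P$ lies on no conic $d_i^2=0$; this is automatic here (if $d_i(P)=0$ then $f_{\sigma_0}$ and its $i$-th sign flip take equal values, so a second factor would vanish), but it should be said. Second, your codimension-$3$ count for $\{f_{\sigma_0}=\partial_x f_{\sigma_0}=\partial_y f_{\sigma_0}=0\}$ is only a plan: one must actually show the three conditions are independent for generic parameters. The paper's route is worth imitating --- the two gradient conditions do not involve $r$, their product over all $\sigma$ clears denominators to an honest pair of polynomials whose common zero locus is finite by B\'ezout, and then a generic $r$ makes $f_{\sigma_0}=0$ miss that finite set. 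Your proposed dimension count in the total configuration space could also work, but as written it is an unproved assertion rather than an argument.
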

\begin{proof}
Recall $f_\sigma = rz - \sum_{i=1}^k \sigma_i \sqrt{(x-u_i z)^2 + (y-v_i z)^2}$. For $\sigma \in \{1,-1\}^k$ let $\sigma' = (-\sigma_1, \dots, -\sigma_j, \sigma_{j+1}, \dots, \sigma_k)$. At an intersection of a degenerate $j$-ellipses with foci $(u_1, v_1),\dots, (u_j, v_j)$ and a $(k-j)$-ellipse with the rest of the foci and radius $r$, both $f_\sigma$ and $f_{\sigma'}$ vanish.

Three generic algebraic curves do not share a common point. Since $j \in [2, k-1]$ and $r>0$, we may assume that no (reducible) conics $(x - u_i z)^2 + (y - v_i z)^2$ vanish at an intersection of a degenerate $j$-ellipse and a $(k-j)$-ellipse with radius $r$ so the intersection is not a pole of any partial derivatives of any factor. By the Leibniz rule, $q_k$ has all partial derivatives vanish at the intersection.

It remains to show that a $k$-ellipse has no more singularities. At a point $p$ on the $k$-ellipse, at least one of the factors $f_\sigma$ at the right hand side of \eqref{galois} vanishes. If only one factor vanishes, then $p$ does not lie on any circle $(x-u_i z)^2 + (y-v_i z)^2$ or else at least two factors vanish so $p$ is not a pole of any partial derivatives of any factor.

First, we show that a generic configuration of the foci and radius enables $f_\sigma$ not to have singularities that is not an intersection of two analytic branches. Euler's relation \cite[Lemma 2.32]{kirwan_1992} says that if $f(x, y, z)$ is a homogeneous polynomial of degree $m$ then
\[
x \frac{\partial f}{\partial x}(x, y, z) + y \frac{\partial f}{\partial y}(x, y, z) + z \frac{\partial f}{\partial z}(x, y, z) = m f(x, y, z).
\]
By Euler's relation, for a point to be singular on $f_\sigma$, it suffices to consider the common zeros of three out of four equations from $\frac{\partial f_\sigma}{\partial x}, \frac{\partial f_\sigma}{\partial y}, \frac{\partial f_\sigma}{\partial z}$, and $f_\sigma$. We consider $\frac{\partial f_\sigma}{\partial x}, \frac{\partial f_\sigma}{\partial y}$, and $f_\sigma$. Let 
\begin{align*}
d_i = \sqrt{(x-u_i z)^2+(y-v_i z)^2}, \quad
d = \prod_i d_i.
\end{align*}
Observe that for a fixed configurations of foci, 
\begin{align*}
\frac{\partial f_\sigma}{\partial x} = - \sum_i \sigma_i \frac{x-u_iz}{d_i},  \quad
\frac{\partial f_\sigma}{\partial y} = -\sum_i \sigma_i \frac{y-v_iz}{d_i}
\end{align*}
do not depend on $r$. Also, $d \frac{\partial f_\sigma}{\partial x} = - \sum_i \sigma_i (x-u_iz) \prod_{j\neq i}d_i$ is integral over $K$ as $\prod_{j\neq i}d_i$ satisfies the monic equation $t^2 - \prod_{j \neq i} d_i^2 \in K[t]$ where $d_i^2 \in K$. Similarly, $d \frac{\partial f_\sigma}{\partial y}$ is also integral over $K$. Let 
\begin{align*}
f_x = \prod_\sigma d \frac{\partial f_\sigma}{\partial x} = d^{2^k} \prod_\sigma \sum_i \sigma_i \frac{x-u_iz}{d_i},\ f_y = \prod_\sigma d \frac{\partial f_\sigma}{\partial y} = d^{2^k} \prod_\sigma \sum_i \sigma_i \frac{y-v_iz}{d_i},
\end{align*}
both of which are polynomials. Due to the automorphism $\sigma \mapsto -\sigma$, $f_x$ and $f_y$ are both squares up to sign. Let $\tilde{f_x}, \tilde{f_y}$ be their square roots. We may assume  generically that $\tilde{f_x}, \tilde{f_y}$ are irreducible since no proper subproduct of right hand side lies in the ground field $K$. Thus, by B\'ezout's theorem, $V(\tilde{f_x}, \tilde{f_y})$ contains finitely many points. For generic $r$, $f_\sigma$ does not pass through any point in $V(\tilde{f_x}, \tilde{f_y})$, a contradiction. Thus at least two factors vanish at a singular point.

Suppose $p$ is a singular point of a $k$-ellipse. Pick $\sigma$ and $\tau$ such that $f_\sigma, f_\tau$ both vanish at $p$. Let $j = \sum |\sigma_i - \tau_i|$ be the number of entries the two vectors differ. 

\begin{itemize}
\item If $j=1$, we may assume that $k$-th digit differs, then $p$ is on the $(k-1)$-ellipse with the first $(k-1)$ foci. From the assumption that $f_\sigma(p) = f_\tau(p) = 0$ and 
\[
f_\sigma f_\tau = \left(rz - \sum_{i=1}^{k-1} \sigma_i \sqrt{(x-u_iz)^2 + (y-v_iz)^2} \right)^2 - (x-u_k z)^2 - (y-v_k z)^2
\]
we have
\[
\frac{\partial p_k}{\partial z}(p) = \frac{p_k}{f_\sigma f_\tau}(p) \frac{\partial f_\sigma f_\tau}{\partial z}(p) = \frac{p_k}{f_\sigma f_\tau}(p)(u_k^2 + v_k^2) = 0 \iff u_k = \pm i v_k.
\]
With the assumption that $p$ is singular, $\frac{\partial p_k}{\partial x}(p)=0$ and $p = [\pm \mathrm{i}: 1: 0]$. 

\item If $j=k, r>0 \implies z=0$ and again $p = [\pm \mathrm{i}: 1: 0]$.
\end{itemize}

Thus $j \in [2, k-1]$ as was to be shown.
\end{proof}

\begin{figure}
    \centering
    \includegraphics[scale=.4]{movie/30}
    \includegraphics[scale=.4]{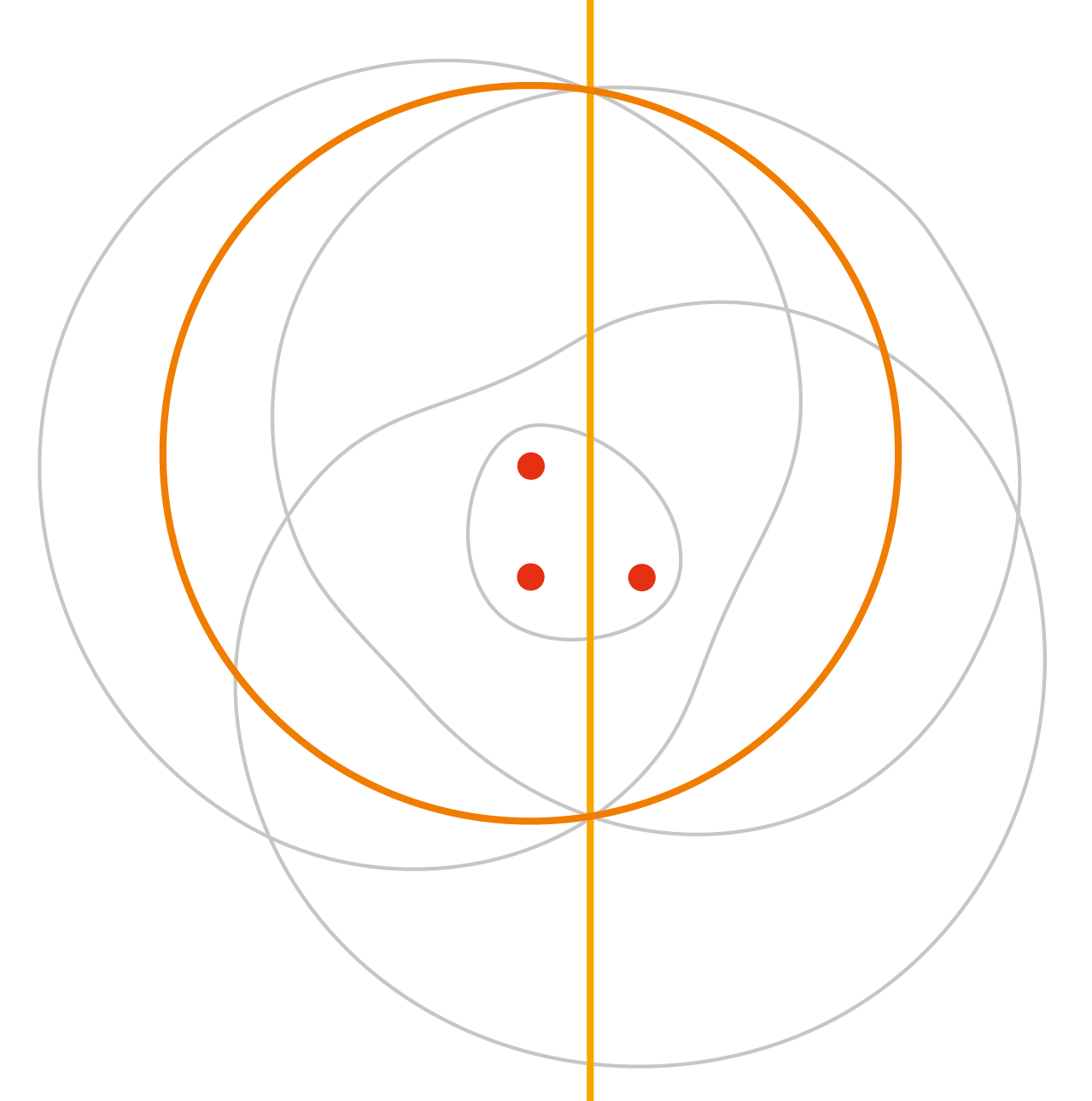}
    \caption{3-ellipse with foci (0,0),(0,1),(1,0) (in $U_z$) and radius 3; bisecting normal of (0,0) and (1,0) intersects circle centered at (0,1) of radius 3 at two nodes of the 3-ellipse.}
    \label{fig:movie30}
\end{figure}

\begin{figure}
    \centering
    \includegraphics[scale=.3]{movie/400}
    \includegraphics[scale=.3]{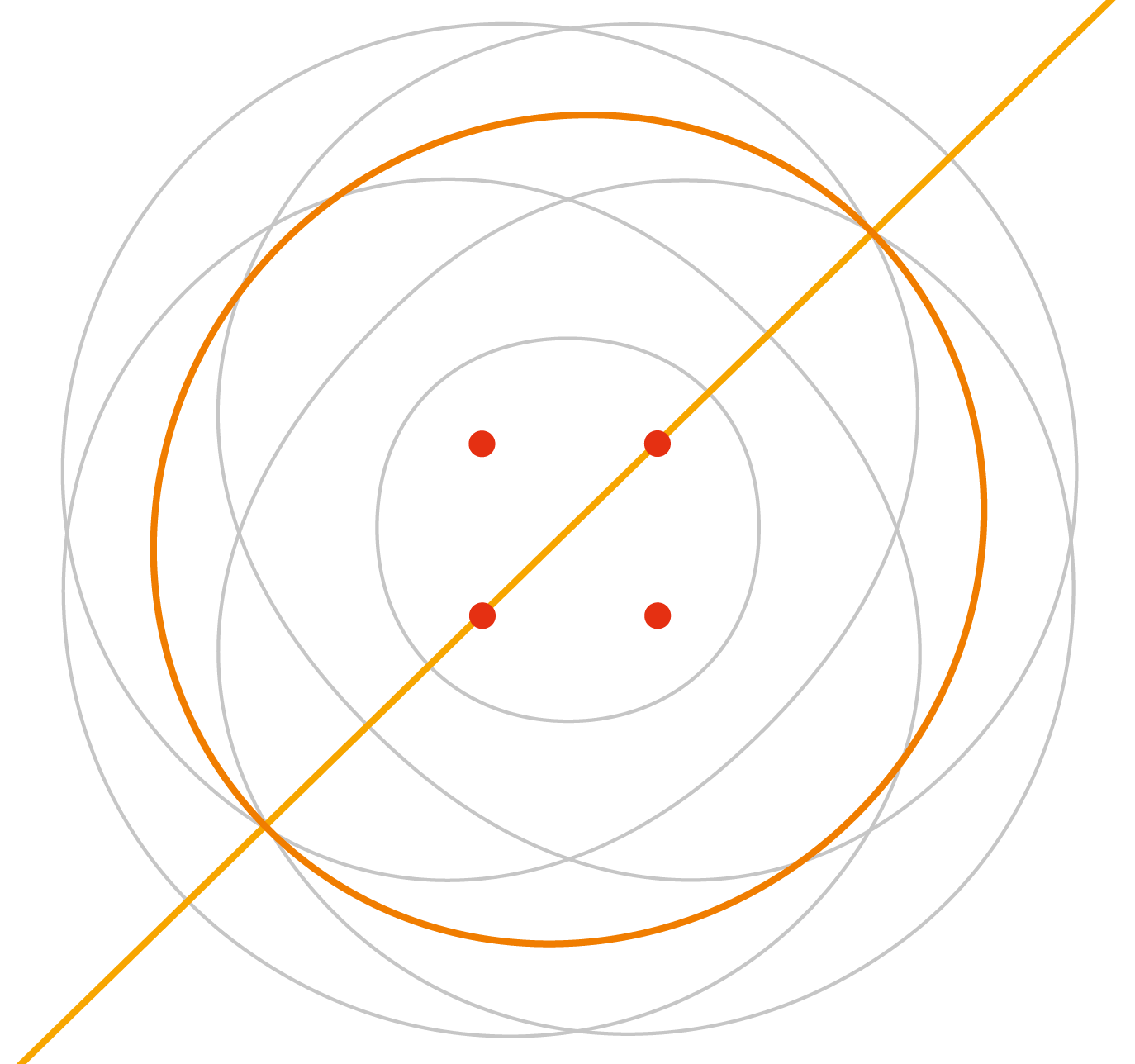}
    \includegraphics[scale=.2]{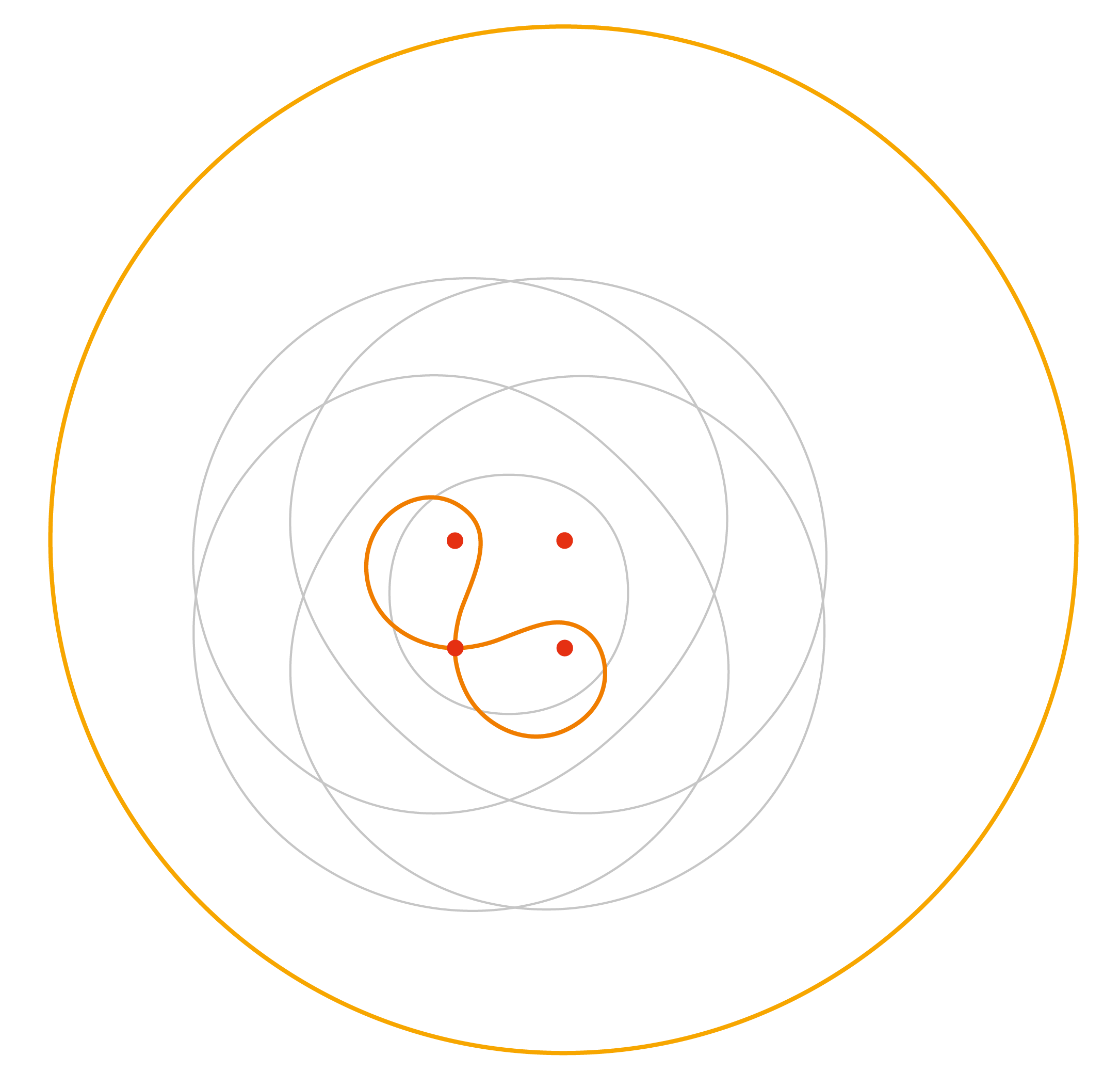}
    \caption{4-ellipse with foci (0,0), (0,1), (1,0), (1,1) (in $U_z$) and radius 5; 2-ellipse with foci (0,0), (1,1) and radius 5 intersects bisecting normal of (1,0) and (0,1) at two nodes of the 4-ellipse; degenerate 3-ellipse with foci (0,0), (0,1), (1,0), and circle centered at (1,1) of radius 5. All their intersections are complex.}
    \label{fig:movie40}
\end{figure}

\begin{figure}
    \centering
    \includegraphics[scale=.3]{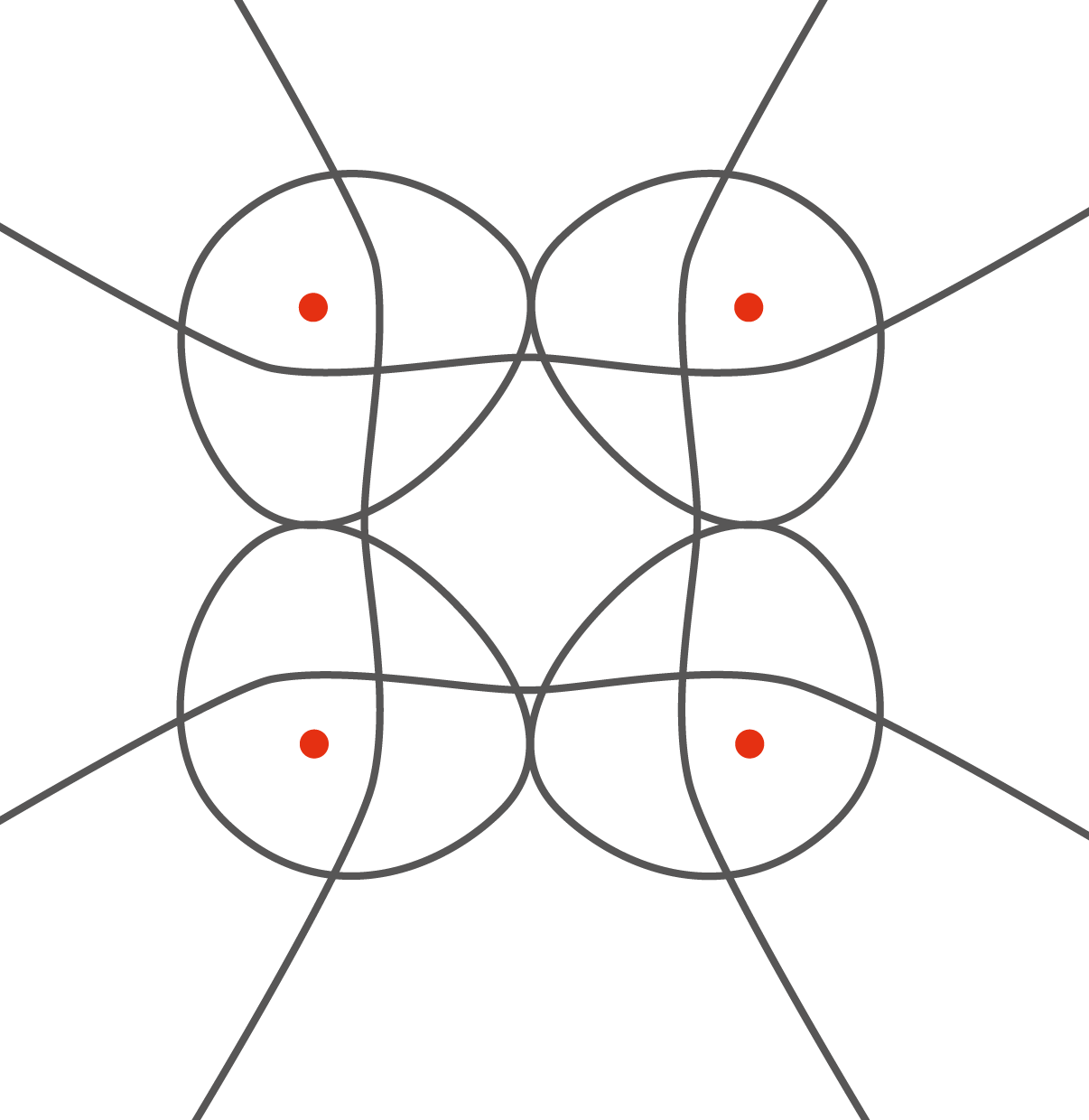}
    \includegraphics[scale=.3]{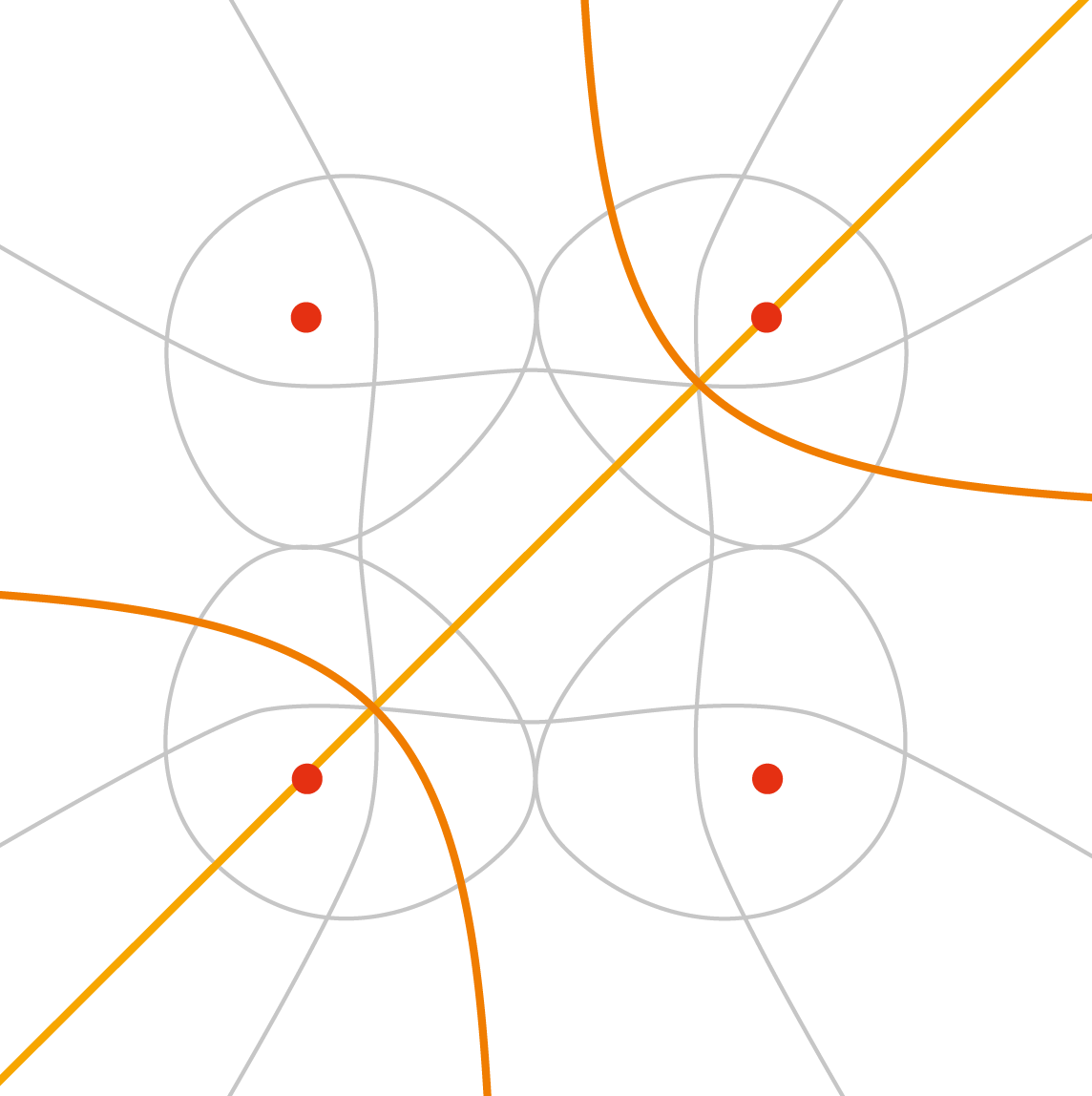}
    \includegraphics[scale=.3]{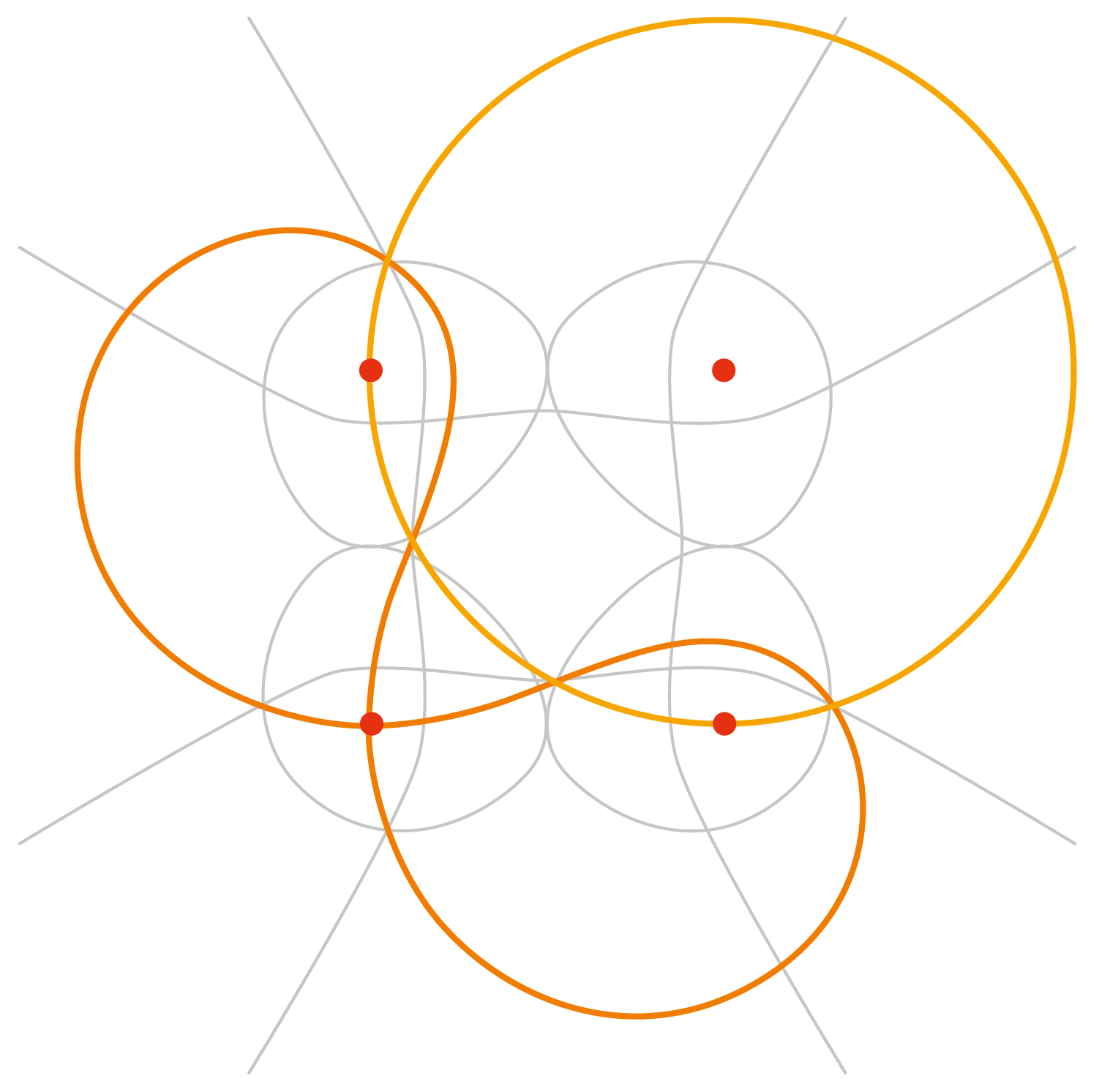}
    \caption{4-ellipse with foci (0,0), (0,1), (1,0), (1,1) (in $U_z$) and radius 1; 2-ellipse(hyperbola) with foci (0,0), (1,1) and radius 1 intersects bisecting normal of (1,0) and (0,1) at two nodes of the 4-ellipse; degenerate 3-ellipse with foci (0,0), (0,1), (1,0) intersects circle centered at (1,1) of radius 1 at four affine real singularities of the 4-ellipse.}
    \label{fig:movie40}
\end{figure}

\begin{eg}
A generic 5-ellipse has 200 nodal singularities. The bisecting normal of a pair of foci intersects a 3-ellipse at $2^3 = 8$ points; a degenerate 3-ellipse intersects a 2-ellipse at $2^2 \times 2 = 8$ points; and a degenerate 4-ellipse intersects a circle at $\frac12(2^4-\binom{4}{2}) \times 2 = 10$ points, where $2 \times 1 \times 1 = 2$ contributed by $[\pm i: 1: 0]$ shall be reduced. There are $\binom{5}{3} = 10, \binom{5}{2} = 10, \binom{5}{1} = 5$ ways to choose foci out of 5 as discussed above, and $200 = 8 \times 10 + 8 \times 10 + 5 \times (10 - 2)$.
\end{eg}

\begin{prop}\label{prop:nodal_singularities}
Except for $[\pm \mathrm{i}: 1: 0]$, a generic $k$-ellipse has 
\[
\sum_{j=2}^{k-1} \binom{k}{j} (d_j' d_{k-j} - 2m_j' m_{k-j}) = \left\{ 
\begin{matrix}
2^{2k-2} - (k+2)2^{k-2} & k\text{ is odd} \\
2^{k-2} - (k+2) 2^{k-2} - \binom{k}{k/2} \left(\binom{k-1}{k/2} - 1 \right) & k \text{ is even} 
\end{matrix} \right.
\]
many nodal singularities.
\end{prop}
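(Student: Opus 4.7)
The plan is to apply B\'ezout's theorem to the pairs of curves identified in Theorem \ref{thm:affinesing}, verify that each intersection is a node, and then sum over partitions. By Theorem \ref{thm:affinesing}, every affine singularity away from $[\pm\mathrm{i}:1:0]$ comes from an ordered partition $F = F_j\sqcup F_{k-j}$ with $2\le j\le k-1$ as an intersection of the degenerate $j$-ellipse on $F_j$ with the $(k-j)$-ellipse of radius $r$ on $F_{k-j}$; there are $\binom{k}{j}$ such partitions for each $j$, and for each partition B\'ezout gives $d_j'\cdot d_{k-j}$ intersections in $\CP^2$ counted with multiplicity.

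Both curves pass through $[\pm\mathrm{i}:1:0]$: the $(k-j)$-ellipse by Proposition \ref{prop:sing_at_inf_locations}, and the degenerate $j$-ellipse because $p_j^{1/2}$ inherits these zeros from $p_j$. By Proposition \ref{prop:tangentcone}, the tangent cones at $[-\mathrm{i}:1:0]$ factor into linear forms $x-\alpha z$ whose slopes depend only on the foci of the corresponding curve; for generic foci the slopes arising from $F_j$ and from $F_{k-j}$ are pairwise distinct, so the two tangent cones share no common component, and the intersection multiplicity at each of $[\pm\mathrm{i}:1:0]$ is exactly $m_j'\cdot m_{k-j}$. At a typical affine intersection $P$, exactly two factors of $q_k$ vanish, namely $f_{(\sigma,\tau)}$ and $f_{(-\sigma,\tau)}$, where $\sigma\in\{\pm 1\}^j$ encodes the degenerate relation on $F_j$ and $\tau$ the radius-$r$ relation on $F_{k-j}$; each is analytic near $P$ for generic foci, and a genericity argument ensures that their gradients at $P$ are linearly independent, so $P$ is a node of the $k$-ellipse and contributes $I_P = 1$. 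Hence each partition contributes $d_j'd_{k-j} - 2m_j'm_{k-j}$ affine nodes.

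Summing yields the first equality. Substituting $d_j' = d_j/2$, $m_j' = m_j/2$ (Corollary \ref{cor:degdeg}) together with the closed forms for $d_k, m_k$ recalled in the introduction, a short calculation gives
\begin{equation*}
d_j'd_{k-j} - 2m_j'm_{k-j} = \begin{cases} 2^{k-2}, & \text{if } j \text{ or } k-j \text{ is odd}, \\ 2^{k-2} - \tfrac{1}{2}\binom{j}{j/2}\binom{k-j}{(k-j)/2}, & \text{if } j, k-j \text{ are both even.} \end{cases}
\end{equation*}
Using $\sum_{j=2}^{k-1}\binom{k}{j} = 2^k - k - 2$, the constant $2^{k-2}$ part contributes $2^{2k-2} - (k+2)2^{k-2}$, which settles the odd case. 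In the even case the extra correction $-\tfrac{1}{2}\sum_{\substack{j=2 \\ j\text{ even}}}^{k-1}\binom{k}{j}\binom{j}{j/2}\binom{k-j}{(k-j)/2}$ is evaluated by the multinomial rewriting $\binom{k}{j}\binom{j}{j/2}\binom{k-j}{(k-j)/2} = \binom{k}{k/2}\binom{k/2}{j/2}^2$ combined with Vandermonde's identity $\sum_a \binom{k/2}{a}^2 = \binom{k}{k/2}$, giving $\sum_{j \text{ even}}\binom{k}{j}\binom{j}{j/2}\binom{k-j}{(k-j)/2} = \binom{k}{k/2}^2$; after removing the boundary terms at $j=0$ and $j=k$ (each equal to $\binom{k}{k/2}$) and using $\binom{k}{k/2} = 2\binom{k-1}{k/2}$, the correction collapses to $-\binom{k}{k/2}\bigl(\binom{k-1}{k/2}-1\bigr)$. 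The main obstacle I anticipate is this combinatorial bookkeeping, together with pinning down the precise genericity needed to guarantee transversality at $[\pm\mathrm{i}:1:0]$ and linear independence of the gradients of $f_{(\sigma,\tau)}$ and $f_{(-\sigma,\tau)}$ at each affine intersection.
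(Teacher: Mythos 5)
Your proposal is correct and follows essentially the same route as the paper: B\'ezout applied to each degenerate $j$-ellipse and $(k-j)$-ellipse pair, subtraction of $2m_j'm_{k-j}$ justified by the generically disjoint tangent cones at $[\pm\mathrm{i}:1:0]$ (the paper phrases this via Wall's Lemma 4.4.2 on shared infinitely near points, which is equivalent), and the same binomial/Vandermonde bookkeeping for the even case. The transversality of the affine intersections that you flag as needing care is likewise only asserted generically in the paper.
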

\begin{proof}
By B\'ezout's theorem, a generic degenerate $j$-ellipse intersects a $(k-j)$-ellipse at $d_j' d_{k-j}$ points. The summation formula follows from Theorem \ref{thm:affinesing}.

By \cite[Lemma 4.4.2]{wall_2004}, the intersection number of a generic degenerate $j$-ellipse $C$ and a $(k-j)$-ellipse $D$ at $P = [\pm \mathrm{i}: 1: 0]$ is $(C.D)_P = \sum m'_P m_P$ sum over infinitely near singular points they share. Generically we have supposed that a $k$-ellipse has distinct tangent directions at $P$. Then $C$ and $D$ do not share tangent directions at infinity so they do not share infinitely near singular points except for $P$.

When $k$ is odd, $j$ and $k-j$ has different parity. Without loss of generality, $j$ is even, then $d_j' d_{k-j} - 2m_j' m_{k-j} = d_j d'_{k-j} - 2m_j m_{k-j}' = 2^{k-2}$. Then by binomial formula,
\[
\sum_{j=2}^{k-1} \binom{k}{j} (d_j' d_{k-j} - 2m_j' m_{k-j}) = 2^{k-2} (2^k - 2 - k) = 2^{2k-2} - (k+2) 2^{k-2}.
\]

When $k$ is even, $j$ and $k-j$ have the same parity. If $j$ is odd, $d_j' d_{k-j} - 2m_j' m_{k-j} = 2^{k-2}$; otherwise, $d_j' d_{k-j} - 2m_j' m_{k-j} = 2^{k-2} - {1\over 2} \binom{j}{j/2} \binom{k-j}{\frac{k-j}{2}}$. 
\[
\sum_{j=2}^{k-1} \binom{k}{j} (d_j' d_{k-j} - 2m_j' m_{k-j}) = \sum_{j=2}^{k-1} \binom{k}{j}2^{k-2} - {1\over 2}\sum_{j=2, \text{even}}^{k-1} \binom{k}{j} \binom{j}{j/2} \binom{k-j}{\frac{k-j}{2}} 
\]
On the right hand side, the first term $\sum_{j=2}^{k-1} \binom{k}{j}2^{k-2} = 2^{2k-2} - (k+2) 2^{k-2}$. By the identity $\sum_{i=0}^n \binom{n}{i} \binom{n}{n-i} = \binom{2n}{n}$ for all $n \in \mathbb{N}$ and $\binom{k-1}{k/2} = {1\over 2}\binom{k}{k/2}$ for $k$ even, the second term is
\begin{align*}
& \quad {1\over 2}\sum_{j=2, \text{even}}^{k-1} \binom{k}{j} \binom{j}{j/2} \binom{k-j}{\frac{k-j}{2}} 
= \binom{k-1}{k/2} \sum_{j=2, \text{even}}^{k-1} \binom{k/2}{j/2} \binom{k/2}{\frac{k-j}{2}} \\
&= \binom{k-1}{k/2} \left( \binom{k}{k/2} - 2 \right) = \binom{k}{k/2} \left(\binom{k-1}{k/2} - 1 \right).
\end{align*}
\end{proof}
\begin{proof}[Proof of Theorem \ref{thm:genus}]
By Theorem \ref{thm:genus_formula}
\begin{align}
    g = \binom{d-1}{2} - \sum_{P \in Sing (C)} \delta_P. \nonumber
\end{align}

$Sing (C)$ consists of affine nodal singularities, whose number is counted in Proposition \ref{prop:nodal_singularities}, and singularities at infinity $P = [\pm \mathrm{i}:1:0]$.
Note that $\delta_{\text{node}} = 1$. 
By Proposition \ref{prop:sing_at_inf_no_infly_near_sing} and Theorem \ref{thm:delta_and_multiplicity}, for $P = [\pm \mathrm{i}: 1: 0], \delta_P = {m_P \choose 2}$, and by Proposition \ref{prop:tangentcone}, 
\[
m_P = \begin{cases} 2^{k-1} &  k \text{ odd}\\
2^{k-1} - {k \choose k/2} &  k \text{ even}\\
\end{cases}
\]
We are now ready compute the genus $g$.

When $k$ is odd, $g = \frac{(2^k - 1) (2^k - 2)}{2} - \sum_{P \in Sing (C)} \delta_P = (2^{2k -1} - 3 \cdot 2^{k-1} + 1) - (2^{2k-2} - (k+2)2^{k-2}) - 2^{k-1}(2^{k-1} - 1) = (k-2)2^{k-2} + 1$.

When $k$ is even, 
\begin{align*}
g &= \frac{1}{2} \left(2^k - \binom{k}{k/2}-1 \right) \left(2^k - \binom{k}{k/2}-2 \right) - \sum_{P \in Sing (C)} \delta_P \\
%&= (k-2)2^{k-2} + 1 - (2^{k+1} - 3) \binom{k-1}{k/2} + \binom{k}{k/2}\binom{k-1}{k/2} + \binom{k}{k/2} \left( \binom{k-1}{k/2} - 1 \right) + (2^k - 1) \binom{k}{k/2} - \binom{k}{k/2}^2 \\
&= (k-2) 2^{k-2} + 1 - \binom{k-1}{k/2}.
\end{align*}
\end{proof}

\begin{proof}[Proof of Theorem \ref{thm:singularities}]
It follows from Theorem \ref{thm:affinesing} and Proposition \ref{prop:nodal_singularities}.
\end{proof}
\section{Dual curve}
In this last section, we give the semidefinite representation of the dual curve of the $k$-ellipse and prove the theorem on the degree of the dual curve of the algebraic $k$-ellipse.

Let $L_k(x, y) = x\cdot A_k + y \cdot B_k + C_k$ be as defined in Section \ref{section:preliminaries}.
By \cite{nie2008}, the convex set bounded by the $k$-ellipse in $\R^2$ is defined by the matrix inequality $\mathcal{E}_k = \{(x, y) \in\R^2: L_k(x, y) \succeq 0\}$, and the $k$-ellipse is the set of all solutions to the semidefinite programming (SDP) problem
\begin{align*}
    & \text{minimize}_{x, y} \ \alpha x + \beta y\\
    & \text{subject to} \ L_k(x, y) \succeq 0
\end{align*}
where $\alpha, \beta$ run over $\R$.
We want to get a similar semidefinite representation for the dual curve.
To this end, we consider the ``dual set" of the convex set bounded by the $k$-ellipse:
\begin{dfn}
The \textit{(geometric) polar} of a convex set $S$ is defined as $S^\circ = \{y|y^\top x \leq 1, \forall x \in S\}$.
\end{dfn}
The polar of $\mathcal{E}_k$, $\mathcal{E}^\circ_k = \{(w_1,w_2)\in \R^2|w_1 x + w_2 y \leq 1, x\cdot A_k + y \cdot B_k + C_k\succeq 0\}$, is the convex set bounded by the dual curve of the $k$-ellipse.
To get the explicit representation for the polar $\mathcal{E}^\circ_k$, i.e., to get rid of the intermediate variables $x$ and $y$, we define:
\begin{dfn}[\cite{ramana1995some}]
The \textit{algebraic polar}  for the set $G = \{x | Q(x) \succeq 0\}$, where $Q(x) = Q_0 + \sum_i x_i Q_i, Q_i \in \mathbb{S}_n, \forall i$, is $G^* = \{-L(X) | X\cdot Q_0 \leq 1, X \succeq 0\}$, where $L(X)_i = X\cdot Q_i, \forall i$.
Here the operator $\cdot$ between two matrices $A$ and $B$ denotes the inner product in $\mathbb{S}^n$, $A\cdot B := \tr(A^\top B)$.
\end{dfn}
So by definition, the algebraic polar of $\mathcal{E}_k$ is $\mathcal{E}^*_k = \{(-X\cdot A_k, - X\cdot B_k) | X \cdot C_k \leq 1, X\succeq 0\}$.
By \cite[Lemma 4]{ramana1995some}, if $\mathbf{0} \in \mathcal{E}_k$, i.e., $C_k\succeq 0$, then $\mathcal{E}^\circ_k = Cl(\mathcal{E}^*_k)$, where $Cl(\cdot)$ denotes the closure.
The dual curve $(-X\cdot A_k, - X\cdot B_k)$ can therefore be computed by the SDP
\begin{align*}
    &\text{minimize}_X \ W\cdot X\\
    &\text{subject to} \ X \cdot C_k = 1, X\succeq 0
\end{align*}
where $W$ runs over all symmetric matrices.

\begin{figure}
    \centering
\includegraphics[scale=.5]{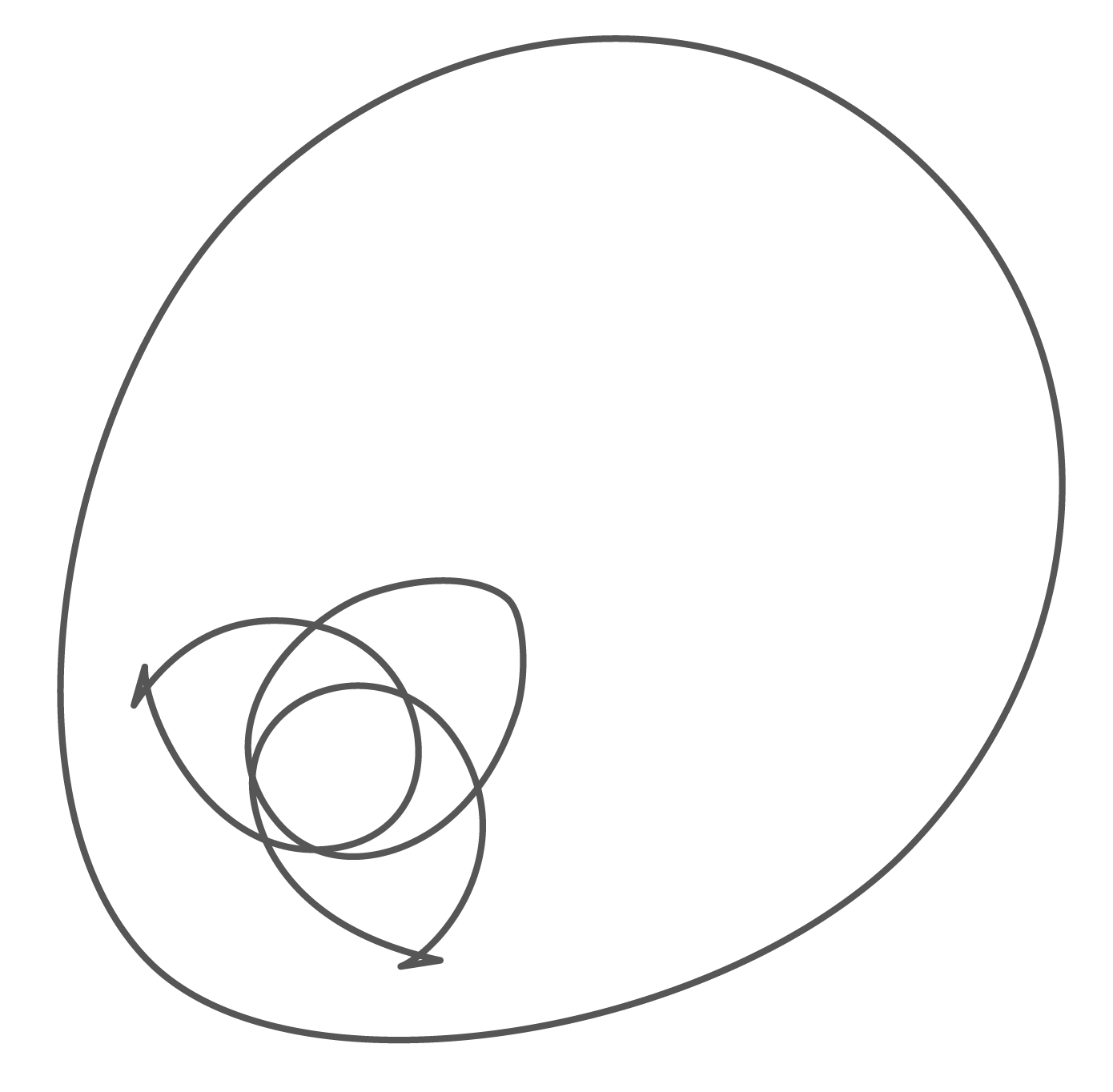}
\caption{Dual of the 3-ellipse with foci (0,0), (1,0), (0,1) (in $U_z$) and radius 3.}
    \label{fig:my_label}
\end{figure}

Finally we finish the proof of the theorem on the degree of the dual curve.
\begin{proof}[Proof of Theorem \ref{thm:dual}]
By \cite[Corollary 7.2.3]{wall_2004}, the degree of the dual curve satisfies $d^\vee - 2g = 2(d-1) - \sum_{P \in Sing (C)} (m_P - r_P)$.
Note that $m_P = r_P$ at a node, and by Corollary \ref{cor:degdeg}, $m_P = 2r_P$ at $P = [\pm \mathrm{i}: 1: 0]$.
When $k$ is odd, $d^\vee = 2(g+d-1) - \sum_{P \in Sing (C)} (m_P - r_P) = 2((k-2) 2^{k-2} + 1 + 2^k - 1) - 2^{k-1} = (k+1)2^{k-1}$.
When $k$ is even,
\begin{align*}
d^\vee &= 2(g+d-1) - \sum_{P \in Sing (C)} (m_P - r_P) \\
&= 2 \left((k-2) 2^{k-2} - \binom{k-1}{k/2} + 1 + 2^k - \binom{k}{k/2} - 1 \right) - \left(2^{k-1} - \binom{k}{k/2} \right) \\
&= (k+1)2^{k-1} - 2\binom{k}{k/2}.
\end{align*}
\end{proof}
\section{Conclusion and Future Work}
We have characterized the singularities of the algebraic $k$-ellipse, and proved the formula for its genus. We have given the LMI formulation of the dual curve of the $k$-ellipse and proved the formula for the degree of the dual curve of the algebraic $k$-ellipse.

Knowledge about singular locus enables us to compute the adjoint series and hence the canonical model of the $k$-ellipses. We are interested in the canonical image of the ellipse family in the moduli space of corresponding genus. In particular, the moduli space of genus 3 curves has dimension $3g-3 = 6$, and the 3-ellipses have 6 degrees of freedom up to projective transformation. Whether every smooth quartic is the canonical model of some 3-ellipse is a possible direction for future research.

\section{Acknowledgement}
We are really grateful to Bernd Sturmfels for his helpful advice throughout the writing of the paper. We also thank Madeline Brandt, Turku Ozlum Celik, Avinash Kulkarni, Pablo Parrilo, Daniel Plaumann, Qingchun Ren, Shamil Shakirov, and Rainer Sinn for helpful discussions at various stages of the writing of the paper.
\bibliographystyle{amsplain}
\bibliography{reference}

\end{document}